\newcommand{\interior}[1]{%
	{\kern0pt#1}^{\mathrm{o}}%
}
\newtheorem{theorem}{Theorem}[section]
\newtheorem{lemma}[theorem]{Lemma}
\newtheorem{proposition}[theorem]{Proposition}
\newtheorem{corollary}[theorem]{Corollary}
\theoremstyle{definition}
\newtheorem{definition}{Definition}[section]
\newtheorem{conjecture}{Conjecture}[section]
\newtheorem{example}{Example}[section]
\theoremstyle{remark}
\DeclareMathOperator{\Tube}{Tube}
\newcommand{\R}{\mathbb R}
\newcommand{\Z}{\mathbb Z}
\newcommand{\LL}{\left\langle}
\newcommand{\RR}{\right\rangle}
\begin{document}	
\title{Non-triviality of Welded knots and ribbon torus-knots}
\author{Tumpa Mahato}
\address{Department of Mathematics, Indian Institute of Science Education and Research Pune, India}
\email{tumpa.mahato@students.iiserpune.ac.in}
\author{Rama Mishra}
\address{Department of Mathematics, Indian Institute of Science Education and Research Pune, India}
\email{r.mishra@iiserpune.ac.in}
\author{Sahil Joshi}
\email{2018maz0001@iitrpr.ac.in}

\begin{abstract}
    In this paper we study welded knots and their invariants. We focus on  generating examples of non-trivial knotted ribbon tori as the tube of  welded knots that are obtained from classical knot diagrams by welding some of the crossings. Non-triviality is shown by determining the fundamental group of the concerned welded knot.
    Sample examples under consideration are the standard diagrams of  the family of $(2, q)$ torus knots and the twist knots. Standard diagrams of knots from Rolfsen's tables 
    with $6$ crossings are also discussed which are not in the family of torus and twist knots. 
\end{abstract}

\keywords{Welded knots, ribbon torus-knots}
\subjclass{57K12,57K45}
\maketitle

\section{Introduction}
Kauffman~\cite{Kau} developed virtual knot theory as a generalization of classical knot theory.
The idea of introducing a virtual crossing in link projections, which are $4$-regular planar graphs, is motivated by the drawing of a non-planar graph in $\R^2$ as well as the construction of a knot diagram from an arbitrary Gauss code (see~\cite{Kau}).
A \emph{virtual link} is represented by a virtual link diagram which is the union of a finite number of circles immersed in $\R^2$ (or $S^2$) having finitely many singularities, each of which is a transverse double point indicated by either a \emph{classical} or a \emph{virtual} crossing as shown in Figure~\ref{fig:crossings}.
Two virtual link diagrams are equivalent if one can be obtained from the other by a finite sequence of \emph{generalized Reidemeister moves} shown in Figures~\ref{fig:moves1} and~\ref{fig:moves2}.
Any two equivalent virtual link diagrams represent the same virtual link.
A virtual link consisting of one circle is called a virtual knot.
The local deformations $F_u$ and $F_o$ of virtual link diagrams shown in Figure~\ref{fig:forbidden} are called \emph{forbidden moves} for if they are allowed, every virtual knot becomes equivalent to a circle embedded in the plane $\R^2$, eventually rendering the theory to be trivial (see Kanenobu~\cite{Kan} or Nelson~\cite{Nel}).
However, if one allows $F_o$-move along with the generalized Reidemeister moves, the theory of \emph{welded links} is obtained which first appeared in Fenn, Rim\'{a}nyi, and Rourke~\cite{Fenn}.
This theory also generalizes classical knot theory, see for instance Rourke~\cite{Rou}.
Using Wirtinger presentation, the notion of knot group has been extended to virtual and welded knots, in a  combinatorial set up in~\cite{Kau}.
These are referred as the fundamental group of the virtual (welded) knot.
In case of classical knots, the knot group is an unknot detector.
It is known to be not true for virtual knots.
It is worth exploring if this is true for welded knots.\par

From a geometric point of view, virtual links are understood as the ambient isotopy classes of links in thickened oriented surfaces considered up to surface homeomorphisms and stablizations/destabilizations of handles disjoint from the links~\cite{Kup}.
But this interpretation does not work for  welded links. However, Satoh~\cite{Satoh1} found an interesting relationship between welded knots and certain embeddings of the torus in $4$-space (called the ribbon torus-knots), which essentially provides a geometric interpretation of welded knots, and is the focus of this paper.\par

Yajima~\cite{Yaj} showed that to each classical knot diagram, one can associate a ribbon torus-knot in $\R^4$ constructed by means of a \emph{tube map}, denoted by $K\mapsto\Tube(K)$.
This map was later extended in a natural way to the set of all virtual knot diagrams by Satoh~\cite{Satoh2}.
Conversely, Satoh proved that every ribbon torus-knot $T$ is associated with a virtual knot diagram $K$ whose image under the tube map, $\Tube(K)$, is ambient isotopic to $T$.
Such a pair $(T,K)$ is called a virtual knot presentation of $T$.
He also proved that for a given pair $(T,K)$  the fundamental group of $K$ is isomorphic to $\pi_1(\R^4\setminus T)$, see Theorem~\ref{thm:groupsTK}.\par

In~\cite[Proposition 3.3]{Satoh2}, Satoh proved that given any diagram of a welded knot, the associated ribbon torus-knot is uniquely determined up to ambient isotopy.
More precisely, we have Theorem~\ref{thm:Satoh}.
It was observed by Winter~\cite[Theorem 3.7]{Win} that the tube map fails to be injective.   Still many examples of nontrivial ribbon torus knots can be generated by simply finding
examples of welded knots which have their fundamental group not isomorphic to $\mathbb{Z}$. \par
 
Ribbon torus knots are a special class of genus $1$ surface knots~\cite{Satoh2}. 
In general, a surface-knot $F$ is a closed, connected, and orientable surface embedded locally flatly in $S^4$.
A surface knot $F$ is said to be \emph{topologically unknotted} if it bounds a locally flat, embedded handlebody in $S^4$, see~\cite{Conway}.
The unknotting conjecture states that $F$ is topologically unknotted if and only if $\pi_1(S^4\setminus F)$ is an infinite cyclic group.
This conjecture has been proved for $2$-knots (i.e. knotted spheres $S^2\hookrightarrow S^4$) in Freedman and Quinn~\cite{Freedman}, and is claimed for the genus $g\geq 1$ case by Hillman and Kawauchi~\cite{Hillman}.
Very recently, Conway and Powell~\cite{Conway} presented a new proof of the unknotting conjecture except for the genus $1$ (i.e. knotted tori $T\hookrightarrow S^4$) and $2$ cases. Despite the unknotting conjecture being open, a knotted torus $T$ will certainly be non-trivial if $\pi_1(S^4\setminus T)$ turns out to be not isomorphic to $\mathbb{Z}$.  It would amount to find a welded knot representation of $T$  whose fundamental group is not isomorphic to $\mathbb{Z}$.  \par

In a welded knot diagram, there are two types of crossings, the real crossings (which are either an over or  an under crossing) and the virtual crossings. Since we are dealing with welded knot theory, we call them welded crossings. We can replace each welded crossing by either an over or an under crossing and obtain a classical knot diagram.  Similarly, in a  classical knot diagram, we can replace few crossings with welded crossings. We  say that we have {\it welded}  those crossings. In nutshell, a welded knot diagram can be seen as if we have welded few crossings in a classical knot diagram. Thus starting with a classical knot diagram, we can generate many different welded knot diagrams and hence many different ribbon torus-knots by applying the tube map on them.  Those welded knot diagrams whose fundamental group is not isomorphic to $\mathbb{Z}$ are certainly non-trivial for which tube map will provide non-trivial ribbon torus knots. 

In this paper we have taken certain classical knot diagrams of some interesting families of knots such as {\it torus knots of type $(2, 2n+1)$ } and {\it twist knots} and study the nature of the welded knots obtained by welding some of the crossings of their standard diagrams. We find that the number as well as the positions of the crossings that have been welded both play important roles in determining the welded knot equivalence. In Rolfsen's table all the knots up to $5$ crossings are either torus or twist knots. First example of a knot outside these two families are in $6$ crossings. We have discussed them as well in this paper.

Since we are transforming classical crossings into welded ones, a related invariant of welded knots, the \emph{welded unknotting number}, is also of our interest.
The welded unknotting number of any welded knot is the minimum number of classical crossings required to be changed to welded crossings to obtain the unknot.
We also discuss its relationship with the unknotting number \cite{Satoh1} and the warping degree of a welded knot~\cite{Li2}.\par

This paper is organised as follows: In Section 2 we have included the basic definitions. It has two subsections. Section 2.1 discusses the basics of virtual and welded knot theory 
and Section 2.2 includes the definition of ribbon torus-knots and its connection with welded knots.  In Section 3, we talk about the {\it  warping degree}  (in Section 3.1) of welded knots
and in Section 3.2, we discuss the {\it welded unknotting number} and include some of our results (inequalities) on welded unknotting number and its relation with the {\it unknotting number}. Section 4 contains our main results. Here we take the standard diagrams of some families of knots, weld some of their crossings and compute the fundamental group of the resulting welded knots.
For the torus and twist knot family we have restricted our study to welding at most $2$ crossings for the purpose of computational simplicity. At the end of Section 4 we have discussed the welding of all the $6$ crossing knots in detail. In Section 5, we mention some remarks and list some future problems.

\section{Prerequisites}
\subsection{Virtual and welded knots}
\begin{definition}
    A \emph{virtual link diagram $K$} is the image of a generic immersion $f:\sqcup_{i=1}^n S^{1}\to\R^{2}$ such that each double point of $K$ is equipped with the structure of a crossing of one of the two types, namely classical or virtual, shown in Figure~\ref{fig:crossings}.
\end{definition}
\begin{figure}[h]
    \centering
    \begin{subfigure}{0.49\textwidth}
        \centering\includegraphics{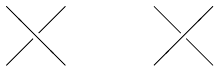}
        \caption{Classical crossings.}
    \end{subfigure}
    \hfill
    \begin{subfigure}{0.49\textwidth}
        \centering\includegraphics{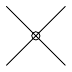}
        \caption{Virtual crossing.}
    \end{subfigure}
    \caption{Two types of crossings.}
    \label{fig:crossings}
\end{figure}
Two virtual link diagrams are said to be equivalent if one can be transformed into the other by a finite sequence of classical Reidemeister moves $R_{1},R_{2},R_{3}$ (see Figure~\ref{fig:moves1}) and virtual Reidemeister moves $vR_{1},vR_{2},vR_{3},vR_{4}$ (see Figure~\ref{fig:moves2}).
This notion defines an equivalence relation on the set of virtual link diagrams.
\begin{definition}
    A \emph{virtual link} is the equivalence class of virtual link diagrams under the equivalence relation generated by classical and virtual Reidemeister moves. 
\end{definition}
Each virtual link of one component is called a virtual knot.
In this paper, we are only concerned about virtual knots.
There are two other local moves --- the over-forbidden move $F_o$ and the under-forbidden move $F_u$ (see Figure~\ref{fig:forbidden}) --- which are not allowed on virtual knots diagrams due to the following result independently proved by Kanenobu~\cite{Kan} and Nelson~\cite{Nel}.
\begin{theorem}[\cite{Kan}]
    For any virtual knot $K$, there exists a finite sequence of generalized Reidemeister moves and $F_o$- and $F_u$-moves that takes $K$ to a trivial knot.
\end{theorem}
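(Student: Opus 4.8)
The plan is to translate the problem into the language of Gauss diagrams, where the forbidden moves acquire a transparent combinatorial description. Recall that a virtual knot $K$ with $n$ classical crossings is recorded by a Gauss diagram: an oriented circle carrying $n$ signed chords, one per classical crossing, each chord oriented by an arrow running from the over-passage (its \emph{tail}) to the under-passage (its \emph{head}); the virtual crossings carry no information and are not recorded. Two virtual knots coincide exactly when their Gauss diagrams are related by the Gauss-diagram versions of $R_1,R_2,R_3$, and the trivial knot is the empty diagram. So the first step is to restate the theorem as the following assertion: using the Gauss-diagram Reidemeister moves together with the two forbidden moves, every Gauss diagram can be reduced to the one with no chords.

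The key observation is a dictionary for how the forbidden moves act on a Gauss diagram: $F_o$ transposes two cyclically adjacent arrow tails (``over-crossings commute''), while $F_u$ transposes two cyclically adjacent arrow heads (``under-crossings commute''), each leaving all signs and arrow orientations unchanged. I would first verify this directly from the pictures of $F_o$ and $F_u$ in Figure~\ref{fig:forbidden}, taking care that it is precisely the same-type, over/under-restricted nature of these swaps that makes them forbidden rather than instances of the legal moves. Note that neither forbidden move, used on its own, changes the cyclic word of heads and tails read off the circle; this rigidity is the combinatorial counterpart of the fact that $F_o$ alone generates the (non-trivial) theory of welded knots, and $F_u$ alone its mirror.

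With this dictionary in hand, the argument proceeds by induction on the number of chords. For the inductive step I want to manoeuvre the diagram, using the adjacent transpositions supplied by the forbidden moves in concert with $R_2$ and $R_3$, into a position where some chord has its head and tail cyclically adjacent, or where two chords sit as a parallel pair; removing that chord by $R_1$, or that pair by $R_2$, strictly lowers the chord count, and iterating lands on the empty diagram. The heart of the matter --- and the step I expect to be the main obstacle --- is exactly this manoeuvring. Since a single tail-transposition or head-transposition can only rearrange endpoints within a run of like type, neither forbidden move alone exposes a reducible chord; the content of the proof is to show that the two of them, combined with $R_2$ and $R_3$, generate enough freedom to effectively commute an over-passage past an under-passage and so disentangle the diagram. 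Carrying this out while keeping careful track of the arrow signs (so that the $R_1$/$R_2$ removals are genuinely applicable) is the crux, and it is precisely here that both forbidden moves are indispensable.

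As a sanity check, the same mechanism admits a purely diagrammatic phrasing: the ability to reorder the over- and under-passages met while traversing $K$ lets one arrange the diagram to be descending from a chosen basepoint, i.e.\ every classical crossing is first encountered as an over-crossing. A descending diagram can then have its classical crossings undone from the top down, and a virtual diagram with no classical crossings is the trivial knot; this gives an independent route to the same conclusion and a useful consistency check on the Gauss-diagram bookkeeping.
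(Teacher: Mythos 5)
Your Gauss-diagram framework is sound as far as it goes, and it is in fact the setting of Nelson's published proof (note that the paper itself gives no proof of this statement; it is quoted from Kanenobu~\cite{Kan}, with Nelson~\cite{Nel} credited independently in the introduction, so the comparison must be with those arguments). The dictionary is correct: $F_o$ transposes two cyclically adjacent arrow tails, $F_u$ two adjacent heads, with signs and arrow directions preserved, and the trivial knot is the empty diagram. The genuine gap is that your proof stops exactly where the theorem lives. As you yourself observe, like-type transpositions never change the cyclic word of heads and tails read around the circle; consequently $F_o$ and $F_u$ alone can never isolate a chord, since clearing the arc between a chord's head and tail necessarily moves some endpoint of one type past an endpoint of the other type. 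Everything therefore hinges on deriving the \emph{mixed} transposition of an adjacent head--tail pair (equivalently, showing every chord can be isolated) from $F_o$, $F_u$, $R_2$ and $R_3$ --- and for this you offer no construction at all: you name it ``the crux'' and ``the main obstacle'' and move on. Identifying the hard step is not the same as carrying it out; what you have is a correct translation of the statement plus an induction skeleton, not a proof.

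The closing ``sanity check'' inherits the same hole: rearranging a diagram to be descending requires reordering over-passages relative to under-passages along the knot, which is precisely the family of mixed exchanges you have not produced (the other half --- that a descending diagram is trivial --- is Satoh's result, quoted in the paper as Proposition~\ref{prop1}, so that part is legitimate). To repair the argument you would need the key lemma that both published proofs supply. One workable route: observe that the Gauss-diagram $R_3$ move \emph{does} contain a mixed transposition, namely on its middle strand, where a head of one triangle chord and a tail of another are swapped; the work is then to take an arbitrary adjacent head--tail pair, use $R_2$ to create auxiliary chords and $F_o$, $F_u$ to adjust the configuration into a legal $R_3$ triangle, perform $R_3$, and cancel the auxiliary chords again, checking signs and arrow directions throughout. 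Without this (or an equivalent) derivation, the induction has no engine.
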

\begin{figure}[h]
    \centering
    \includegraphics{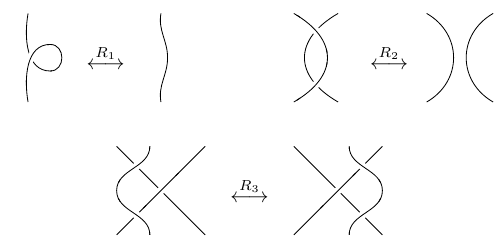}
    \caption{Reidemeister moves.}
    \label{fig:moves1}
\end{figure}
\begin{figure}[h]
    \centering
    \includegraphics{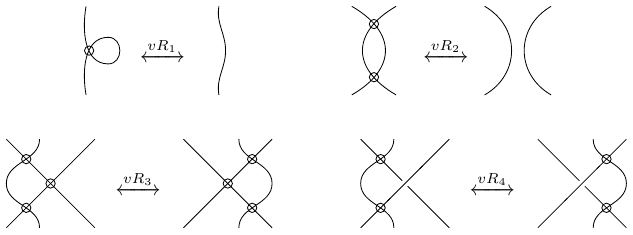}
    \caption{Virtual Reidemeister moves. }
    \label{fig:moves2}
\end{figure}
\begin{figure}[h]
    \includegraphics[scale=1]{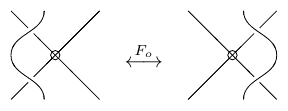}
    \qquad
    \includegraphics[scale=1]{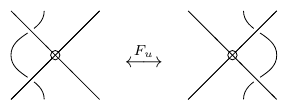}
    \caption{Forbidden moves.}
    \label{fig:forbidden}
\end{figure}
Welded knot theory is a quotient of virtual knot theory obtained by allowing the $F_o$-move.
\begin{definition}
    A welded knot is the equivalence class of all those virtual knot diagrams which are related by a finite sequence of classical and virtual Reidemeister moves, and the over forbidden move.
\end{definition}
The fundamental group of a virtual knot $K$, denoted by $G(K)$, is defined by generalizing the Wirtinger presentation of the groups of classical knots in a combinatorial manner.
The part of a virtual knot diagram between any two consecutive undercrossings is called an arc of the diagram.
The set of labeled arcs of $K$ forms a generating set of $G(K)$ and for each classical crossing, the relation between the labels are as mentioned in Figure~\ref{fig:relation} holds.
There are no relations for the virtual crossings.
\begin{figure}[h]
    \centering
    \includegraphics{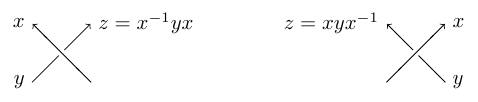}
    \caption{The Wirtinger labeling of a crossing.}
    \label{fig:relation}
\end{figure}

\subsection{Ribbon torus-knots and their association with virtual knots}
\begin{definition}
    Let $M_{1}$ and $M_{2}$ be two $3$-dimensional manifolds immersed in $S^4$.
    Then their intersection $D = M_{1} \cap M_{2}$ is called a ribbon singularity if $D$ is homeomorphic to $D^{2}$ such that $D \subset \partial M_{1}$ and $ \partial D \subset \partial M_{2} $.
\end{definition}
\begin{definition}
    A ribbon torus is an immersed solid torus $D^{2} \times S^{1} \subset S^{4}$ whose singular set consists of a finite number of ribbon singularities.
\end{definition}
\begin{definition}
    A ribbon torus-knot is an embedded torus $S^{1} \times S^{1} \subset S^{4}$ which bounds a ribbon torus.
\end{definition}

To a virtual knot diagram we associate a ribbon torus knot diagram as follows: At a classical crossing, the tube corresponds to the under arc will go through the tube corresponds to the over arc; and at a virtual crossing, the corresponding tubes are independent of the over-under information (see Figure~\ref{fig:tubemap}).
This correspondence is called the tube map.
\begin{figure}[h]
    \centering
    \includegraphics[width=0.4\textwidth]{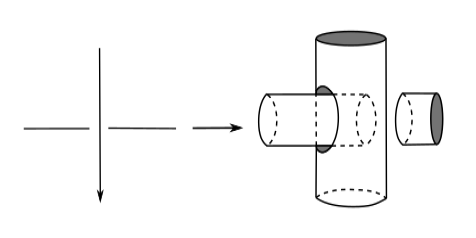}
    \qquad
    \includegraphics[width=0.4\textwidth]{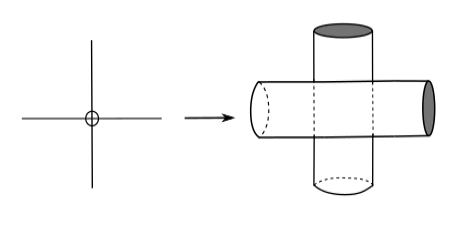}
    \caption{Tube map.}
    \label{fig:tubemap}
\end{figure}

Satoh proved the following results.
\begin{theorem}[\cite{Satoh2}]
    Any ribbon-torus knot $T$ is associated with some virtual knot diagram $K$; $T\cong\Tube(K)$.
\end{theorem}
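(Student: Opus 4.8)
The plan is to invert the tube construction: from the ribbon torus-knot $T$ I would extract a one-dimensional diagram in the plane whose crossing data records how the bounding ribbon solid torus is immersed, and then verify that applying $\Tube$ to this diagram reproduces $T$.

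First I would fix a ribbon solid torus $V=f(D^2\times S^1)$ with $\partial V=T$, where $f$ is the defining immersion, and after a small ambient isotopy put $V$ in general position so that its singular set is a finite disjoint union of ribbon disks $D_1,\dots,D_c$ as in the definition above. Since $V$ is by construction a tube around its core circle $\gamma=f(\{0\}\times S^1)$, which I may take to be embedded in $S^4$, recovering a diagram amounts to recording how this tube wraps around and passes through itself. To keep the singularities under control I would use a ribbon presentation of $V$ as standardly embedded $0$-handles joined by $1$-handle tubes, arranged so that each ribbon disk $D_i$ occurs exactly where one tube passes transversally through another.

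Next I would choose a generic projection of $S^4$ onto a plane and consider the image of $\gamma$. Each ribbon singularity projects to a transversal double point, which I mark as a classical crossing whose over/under information is dictated by the convention of Figure~\ref{fig:tubemap}, the pierced tube giving the over-arc and the piercing tube the under-arc. Any additional double points produced by the projection, not coming from genuine ribbon singularities, I mark as virtual crossings; these are consistent because the two strands of $V$ are separated in the remaining coordinate, and the tube map imposes no relation at a virtual crossing. The result is a virtual knot diagram $K$.

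Finally I would check that $\Tube(K)\cong T$ by matching local models. Near a classical crossing the tube map sends the under-tube through the over-tube, which is precisely the local picture of a ribbon singularity, while near a virtual crossing the tubes are routed past one another without meeting, matching the separated strands of $V$. Reassembling these local pictures along $\gamma$ recovers $V$ up to ambient isotopy, so $\Tube(K)=\partial V=T$. I expect the main obstacle to be the structural step: producing the ribbon presentation of $V$ compatibly with a single generic projection, so that every immersed double point is a clean ribbon singularity modelled on a classical crossing and every other projection crossing can be certified virtual. Controlling the fourth coordinate at these accidental crossings, and thereby distinguishing a legitimate virtual crossing from a spurious classical one, is the delicate point that guarantees the reconstructed tube is isotopic to the original $V$.
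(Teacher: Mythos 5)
The paper itself offers no proof of this theorem --- it is quoted directly from Satoh~\cite{Satoh2} --- so your proposal can only be measured against the argument in that reference, whose strategy (ribbon presentation of the bounding solid torus, classical crossings at ribbon singularities, virtual crossings as projection artifacts) your outline does correctly identify. But two genuine gaps remain, and both sit exactly at the point you yourself flag as ``the delicate point''; flagging a difficulty is not the same as resolving it. The first gap is the single generic projection $S^4\to\R^2$. Nothing guarantees that the double points of the projected core correspond bijectively to ribbon singularities plus honest artifacts: the two sheets of $V$ meeting in a ribbon disk have cores that are close in $S^4$, so their projections may be tangent, may cross several times, or may cross at a point far from where the singularity actually sits; conversely, a projected double point can involve strands that also participate in singularities elsewhere. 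Satoh does not project blindly: the ribbon presentation is first \emph{normalized} --- the $0$-handles are isotoped to standard balls in a $3$-dimensional slice and the $1$-handles made standard over a planar core --- and the diagram is read off this normalized position. Note also that in a ribbon presentation the singularities are tube-through-ball, whereas the local model of Figure~\ref{fig:tubemap} is tube-through-tube; reconciling these two pictures is a small but necessary step you omit.

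The second gap is the closing sentence ``reassembling these local pictures along $\gamma$ recovers $V$ up to ambient isotopy,'' which is not a verification but a restatement of the theorem. Matching local models at the crossings gives no control over the tube along the arcs \emph{between} crossings: one needs a uniqueness lemma saying that a singularity-free embedded tube in $S^4$ over a fixed arc of core is standard up to ambient isotopy rel its ends, i.e.\ that no local knotting or twisting can occur there. This is where genuinely four-dimensional input enters (for instance, the possible twisting of a tube about its core circle is governed by $\pi_1(SO(3))\cong\Z/2$, so ``no twisting'' is a claim that must be argued, not assumed). Without such a lemma the local identifications cannot be glued into a global ambient isotopy between $\Tube(K)$ and $T$. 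These two missing ingredients --- normalization before projection, and uniqueness of singularity-free tubes --- constitute the actual mathematical content of Satoh's theorem, and your proposal does not supply either.
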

\begin{theorem}[\cite{Satoh2}]\label{thm:groupsTK}
    Let $(T,K)$ be a virtual knot presentation of a ribbon torus-knot $T$ and let $G(K)$ be the fundamental group of $K$.
    Then $\pi_1(\R^4\setminus T) \cong G(K)$.
\end{theorem}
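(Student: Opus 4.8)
The plan is to read the group $\pi_1(\R^4\setminus T)$ off the geometry of $T=\Tube(K)$ and to verify, crossing by crossing, that the resulting presentation is literally the combinatorial Wirtinger presentation defining $G(K)$. First I would make the tube construction explicit. Placing the diagram of $K$ in a plane $\R^2\subset\R^3\subset\R^4$, the torus $T$ is assembled from cylindrical sheets (the tubes) over the arcs of $K$, glued together along circles at the crossings as prescribed in Figure~\ref{fig:tubemap}. Projecting $T$ from $\R^4$ onto $\R^3$ yields a \emph{broken surface diagram}, the surface-knot analogue of a knot diagram: its sheets are exactly the tubes over the arcs of $K$ (in particular the under-tube is broken into two sheets where it passes through an over-tube, just as the under-arc is broken at a crossing), and its double-point (ribbon-singularity) curves are exactly the circles along which an under-tube passes through an over-tube at a \emph{classical} crossing. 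At a \emph{virtual} crossing the two tubes meet only in the projection; in $\R^4$ they are pushed apart in the fourth coordinate and so contribute no double-point curve, matching the fact that $G(K)$ has no relation there.

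Next I would compute $\pi_1$ from this broken surface diagram by the surface-knot version of the Wirtinger procedure, which is just van Kampen's theorem applied to a regular neighborhood decomposition of $T$ in $\R^4$. Since $T$ is $2$-dimensional in $\R^4$, a general-position argument shows that every loop in the complement can be homotoped off the singular set and then written in terms of \emph{meridians} of the sheets --- small loops bounding a disk that punctures one sheet once. Thus $\pi_1(\R^4\setminus T)$ is generated by one meridian per sheet, with one relation contributed by each double-point curve. Reading the local model at such a curve, the meridians of the two sheets of the under-tube differ by conjugation by the meridian of the over-tube sheet; this is precisely the relation of Figure~\ref{fig:relation}.

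Assembling these, the generators are indexed by the arcs of $K$ and the relations by the classical crossings, with each relation the corresponding Wirtinger relation, so the van Kampen presentation $\LL x_1,\dots,x_n \mid r_1,\dots,r_m\RR$ of $\pi_1(\R^4\setminus T)$ coincides symbol-for-symbol with the combinatorial presentation defining $G(K)$, giving $\pi_1(\R^4\setminus T)\cong G(K)$. The main obstacle is the careful local analysis at the crossings: one must verify that each arc yields exactly one sheet and hence one meridian generator, that each classical crossing produces a single double-point circle whose van Kampen relation is exactly the conjugation relation $c=b^{\pm1}ab^{\mp1}$ with the correct handedness dictated by the sign of the crossing, and --- equally essential --- that the two sheets meeting at a virtual crossing are genuinely unlinked in $\R^4$ so that no spurious relation appears. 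Establishing this dictionary faithfully, so that the geometric and combinatorial presentations literally agree, is the crux of the argument.
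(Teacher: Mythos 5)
The paper itself contains no proof of this statement: it is quoted verbatim from Satoh \cite{Satoh2}, so your attempt can only be measured against the standard argument (Satoh's, building on Yajima \cite{Yaj}), which is indeed the computation you outline: read $\pi_1(\R^4\setminus\Tube(K))$ off a broken surface diagram by the surface-knot Wirtinger procedure (van Kampen plus general position), then match the result with the combinatorial presentation of $G(K)$. At the level of strategy your proposal is the right one.

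The genuine gap is in your local model at the crossings --- the very step you correctly identify as the crux. You describe a classical crossing as the under-tube passing beneath the over-tube, and a virtual crossing as the two tubes being ``pushed apart in the fourth coordinate''; but these two descriptions are the same configuration. A tube whose $\R^3$-projection passes through another tube's projection while staying on one side of it in the fourth coordinate is split from it in $\R^4$, which is exactly the virtual (unlinked) situation. Concretely, the projection of $\Tube(K)$ has \emph{two} double point circles at every crossing, classical or virtual (where the under-tube's wall enters and exits the over-tube's wall), not one circle at classical crossings and none at virtual ones. If the under-tube stayed below the over-tube throughout the passage, it would be the broken sheet at both circles, and the two conjugation relations would cancel: with $a$ the meridian of the incoming piece, $m$ of the piece inside, $c$ of the outgoing piece, and $b$ of the over-tube, one gets $m=b^{-1}ab$ at the entry circle and $c=bmb^{-1}=a$ at the exit circle, i.e.\ the trivial relation. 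So your model computes the virtual answer at every crossing and would yield $\pi_1(\R^4\setminus\Tube(K))\cong\Z$ for all $K$, contradicting the theorem already for the trefoil. What actually distinguishes a classical crossing in Satoh's construction is that the under-tube enters the over-tube \emph{above} it in the fourth coordinate and exits \emph{below} it, crossing the over-tube's level exactly once while inside it; that single transverse level-crossing is the ribbon singularity of the bounding immersed solid torus. With this model, one double point circle breaks the under-tube (into the two sheets corresponding to the two under-arcs) while the other breaks the over-tube, splitting off a ``plug'' disk sheet with its own meridian generator $p$; the two relations read $c=b^{\pm1}ab^{\mp1}$ and $p=a^{\pm1}ba^{\mp1}$, and only after eliminating the superfluous generator $p$ by a Tietze move does the local presentation reduce to the Wirtinger relation. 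In particular the sheets are \emph{not} in bijection with the arcs of $K$, and the geometric presentation agrees with $G(K)$ only after this elimination, not ``symbol-for-symbol.'' Until the fourth-coordinate structure at a classical crossing is set up in this way, the dictionary your proof rests on cannot be established.
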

\begin{theorem}[\cite{Satoh2}]\label{thm:Satoh}
    If $K$ and $K'$ are equivalent welded knots, then $\Tube(K)$ and $\Tube(K')$ are equivalent as ribbon torus-knots.
\end{theorem}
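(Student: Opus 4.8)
The plan is to reduce the statement to a local, move-by-move verification. By definition, $K$ and $K'$ being equivalent welded knots means they are connected by a finite sequence of classical Reidemeister moves $R_1,R_2,R_3$, virtual Reidemeister moves $vR_1,vR_2,vR_3,vR_4$, and over-forbidden moves $F_o$. Since ambient isotopy of ribbon torus-knots is an equivalence relation, hence transitive, it suffices to treat the case where $K'$ is obtained from $K$ by a \emph{single} such move. Each move is supported in a disk $D\subset\R^2$ and leaves the diagram unchanged outside $D$. Under the tube map, the part of the diagram outside $D$ produces the same portion of the torus for both $K$ and $K'$, while the portion coming from $D$ lies in a $4$-ball $B\subset S^4$. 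Thus it is enough to exhibit, for each generating move, an ambient isotopy of $S^4$ supported in $B$ (fixing $\partial B$ pointwise) that carries the local tube picture before the move to the one after it.

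For the classical moves $R_1,R_2,R_3$ this is essentially the content of Yajima's original tube construction~\cite{Yaj}: the planar isotopy realizing a classical Reidemeister move thickens to a $4$-dimensional ambient isotopy of the corresponding tubes, where at each classical crossing the under-tube threads through a ribbon disk on the boundary of the over-tube. For the virtual moves, recall that at a virtual crossing the two tubes are \emph{unlinked}: writing $\R^4=\R^3\times\R$, one tube is pushed off the other in the final $\R$-coordinate and the two pass without any threading. The moves $vR_1$ and $vR_2$ merely create or delete such unlinked passings, $vR_3$ permutes three of them, and the mixed move $vR_4$ slides a strand across a virtual crossing; in every case there is no linking or threading to obstruct the corresponding motion of tubes, so each is realized by an ambient isotopy supported in $B$.

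The essential and most delicate case is the over-forbidden move $F_o$, and here the over/under asymmetry of the tube construction must be used in a crucial way. In the local model, the $F_o$-move slides a strand that is the \emph{over}-strand at the relevant classical crossing across an adjacent virtual crossing. Under the tube map the over-strand corresponds to a tube that is not threaded through anything --- it is precisely the tube through whose ribbon disk other tubes pass. Pushing this sleeve-like tube, together with the ribbon disk it carries, across the region of the virtual crossing encounters no obstruction, because at the virtual crossing the tubes are separated in the $\R$-direction and the over-tube is free to be lifted into that direction, translated across, and lowered back. I would make this explicit by choosing a local parametrization of the three tubes involved and writing down an isotopy that performs exactly this lift--translate--lower motion, checking that it fixes $\partial B$.

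The main obstacle is to see clearly why the same argument \emph{fails} for the under-forbidden move $F_u$, and correspondingly why the hypothesis may only permit $F_o$. In $F_u$ one would have to drag the \emph{under}-strand across the virtual crossing; but the under-tube is threaded through the ribbon disk on the over-tube, and carrying it across would force one tube to pass through another, which is not an ambient isotopy. Thus the decisive step is the explicit verification that the threading occurs only on the under-side, so that the $F_o$ motion is unobstructed while the $F_u$ motion is genuinely blocked. Composing the local ambient isotopies obtained for each move along the sequence relating $K$ to $K'$ then yields an ambient isotopy $\Tube(K)\cong\Tube(K')$, completing the proof (cf.~\cite{Satoh2}).
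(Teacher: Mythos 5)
First, a remark on the comparison itself: the paper contains no proof of this statement --- it is imported verbatim from Satoh~\cite{Satoh2} (Proposition~3.3 there), and Satoh's proof is exactly the reduction you propose: check that each classical move, each virtual move, and the $F_o$-move is realized by an ambient isotopy of the local tube picture. So your strategy coincides with the source's, and your treatment of $R_1$--$R_3$ and $vR_1$--$vR_4$ is unobjectionable.

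The gap is in the one case that carries all the content, the $F_o$-move, where the explicit mechanism you commit to --- \say{lift the over-tube into the fourth coordinate, translate it across the virtual crossing, lower it back} --- describes an isotopy that does not exist. The reason lies in the local structure of the tube map at a \emph{classical} crossing, which your sketch does not pin down: the under-tube crosses the wall of the over-tube at two circles in the $\R^3$-projection, and these two circles are pushed off in \emph{opposite} fourth-coordinate directions (one up, one down). This alternation is precisely what encodes the over/under information and produces the Wirtinger conjugation in $\pi_1$; it clasps the under-tube through the over-tube so that, relative to the boundary of the local $4$-ball, the threading cannot be undone. Consequently the over-tube cannot be displaced in \emph{either} fourth-coordinate direction: whichever way you lift it, its wall must cross the under-tube at the piercing circle that was pushed off in that same direction, at the instant the two heights agree. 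A sanity check makes the failure vivid: if the over-tube at a crossing could be lifted off, translated, and lowered while fixing the strands through it, the same argument applied in a small ball around a \emph{single} classical crossing would show that a classical crossing and a virtual crossing have isotopic tube pictures; then $\Tube(K)$ would be an unknotted torus for every diagram $K$, contradicting Theorem~\ref{thm:groupsTK} applied to the trefoil. The isotopy that actually proves the $F_o$ case never separates the over-tube from its companions: the sleeve slides \emph{along} the two strands threading through it, carrying its piercing circles with it, and the point is that near the \emph{virtual} crossing those two strands are disjoint --- separated in the fourth coordinate --- so their piercing patterns can pass one another inside the wall of the sleeve as it crosses that region. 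The disjointness being exploited is that of the two under-strands from each other, not any freedom of the over-tube to leave them; this is also exactly where the argument breaks for $F_u$, since there the moving tube is clasped through both strands rather than enclosing them.

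A smaller point: your final paragraph misidentifies what must be proved. The theorem requires only the $F_o$ verification; showing that the analogous motion for $F_u$ is \say{blocked} is neither needed nor, as reasoning, sound --- the failure of one candidate isotopy proves nothing, and non-invariance of $\Tube$ under $F_u$ is established by an invariant (the fundamental group of the complement, which $F_u$ does not preserve), not by a local obstruction.
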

	
It is easy to see that the fundamental group of the welded unknot $K$ is isomorphic to $\mathbb{Z}$ and hence from  Theorem 2.3, 
$\pi_1(\R^4\setminus T(K)) \cong \mathbb{Z}$. If the unknotting conjecture \cite{Conway} is true, it will imply that $T(K)$ is trivial. However, if  $\pi_1(\R^4\setminus T) \ncong \mathbb{Z}$	then $T(K)$ is non-trivial. Thus for obtaining various examples of non-trivial knotted ribbon tori, we can realise them as tube of such welded knots $K$ whose fundamental group $G(K)\ncong \Z$.
In Section 4, we study many such examples where our welded knot diagrams are obtained from a classical knot diagram after welding some of its crossings.  We compute the fundamental group in each case and check if that is not isomorphic to $\mathbb{Z}$. This gives us a machinery to obtain non-trivial knotted ribbon tori.  In case the fundamental group of a welded knot turns out to be $\mathbb{Z}$, we cannot conclude that it is trivial.  In such situations some other strategies are used. In few examples one can use the generalised Reidemeister moves  to show that it is indeed an unknot. In general, it is difficult to conclude.

For a classical knot diagram which represents a non-trivial knot, welding a certain number of crossings  will always yield an unknot \cite{Kau}.  In Section 3, we discuss some numerical invariants of welded knots that may be useful when it comes to concluding the non-triviality of a welded knot.

\section{Some numerical invariants of welded knots}

\subsection{Warping degree}

Let $D$ be an oriented welded knot diagram of a welded knot $K$.
Let $a$ be a point on $D$ which is not a crossing point.
We will call it a base point of $D$.
\begin{definition}
    The warping degree of $D$ with the base point $a$, denoted by $d(D_{a})$, is the number of crossing points of $D$, we meet as a under-crossing at the first encounter while we go along the orientation of $D$.\\
    The warping degree of $D$, denoted by $d(D)$, is the minimal warping degree for all base points of $D$.
    The warping degree of $K$ is defined by
    \[d(K) \coloneqq \min_{D}\{d(D),d(-D) \mid \text{$D$ is a diagram of $K$}\}.\]
\end{definition}
\begin{figure}[h]
    \centering
    \includegraphics[width=0.8\textwidth]{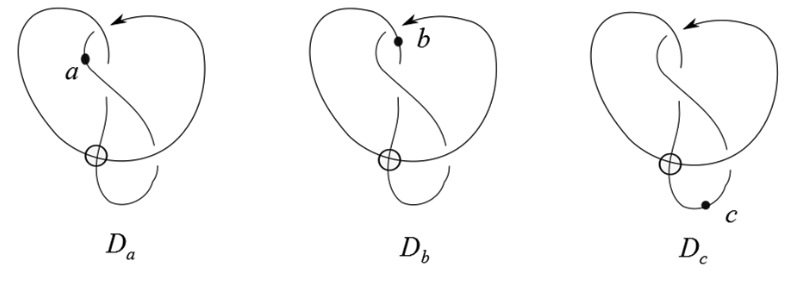}
    \caption{The welded figure-eight knot diagram $D$. }
    \label{fig:figure8}
\end{figure}
\begin{figure}[h]
    \includegraphics[width=0.8\textwidth]{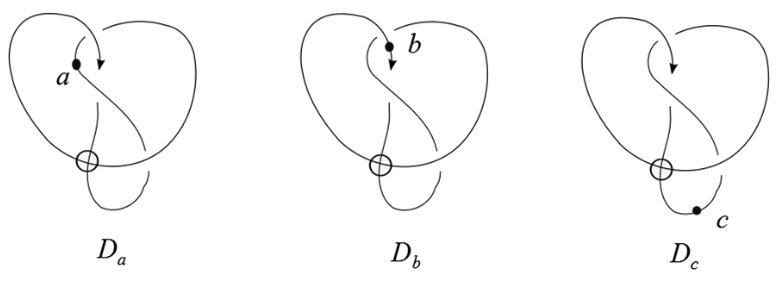}
    \caption{The welded figure eight knot diagram $-D$.}
    \label{fig:figure82}
\end{figure}
\begin{example}
    Let $D$ be a diagram of a welded figure-eight knot $K$ with one welded crossing as in Figure \ref{fig:figure8}.
    Here, we have $d(D_{b} ) = 0, d(D_{a} ) = 1, d(D_{c} ) = 1$. Hence, we have $d(D) = 0$.
    For the inverse $-D$ of $D$ (see Figure \ref{fig:figure82}), we have $d(-D_{a} )= 2, d(-D_{b} )=3,d(-D_{c} )=2$.
    Hence, we have $d(-D)=2$ and $d(K)=0$.
\end{example}
\begin{lemma}[\cite{Li}]
    The unknotting number of a welded knot is less or equal to its warping degree i.e
    \[u(K) \leq d(K).\]
\end{lemma}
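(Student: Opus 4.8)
The plan is to adapt Shimizu's argument for the classical warping-degree inequality to the welded setting, with crossing changes replaced by welding operations. First I would choose an oriented diagram $D$ of $K$ together with a base point $a$ realizing the minimum in the definition of the warping degree, so that (after possibly replacing $D$ by $-D$, which only reverses the orientation of $K$ and hence leaves $u(K)$ unchanged) we have $d(D_a)=d(K)$. Traversing $D$ once from $a$ along the orientation, let $W$ be the set of classical crossings at which the first of the two passages runs along the under-strand; by definition $|W|=d(D_a)=d(K)$.

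Next I would weld every crossing in $W$, producing a welded diagram $D'$ of some welded knot. By the definition of the welded unknotting number it suffices to prove that $D'$ is the welded unknot, since this exhibits an unknotting by $|W|=d(K)$ welds and therefore gives $u(K)\le d(K)$. The whole point of welding exactly the crossings of $W$ is that in $D'$ every surviving classical crossing is met over-strand-first when we traverse from $a$; that is, $D'$ is an \emph{ascending} welded diagram.

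The heart of the argument is then the claim that every ascending welded diagram is welded-equivalent to the trivial diagram, which I would establish by induction on the number $n$ of classical crossings. When $n=0$ the diagram carries only virtual crossings and is welded-trivial, since its underlying Gauss data is empty and the virtual moves $vR_1,\dots,vR_4$ reduce it to an embedded circle. For the inductive step I would consider the initial over-arc, namely the portion of $D'$ running from $a$ up to and across the first classical crossing; because this arc passes over at that crossing, it can be slid freely across the rest of the diagram. Here $R_2$ and $R_3$ let an over-strand pass through classical crossings, while the over-forbidden move $F_o$ (Figure~\ref{fig:forbidden}) is precisely what lets an over-strand pass through a virtual crossing. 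Sliding the over-arc off removes that classical crossing via an $R_2$ or $R_1$ cancellation, leaving an ascending diagram with one fewer classical crossing, so the induction closes.

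The main obstacle I expect lies in this last step: formalizing that the initial over-arc can be pushed off using only the permitted welded moves (in particular only $F_o$, never $F_u$), and checking that no new under-first crossing is created, so that the ascending property is genuinely preserved after each reduction. The warping-degree bookkeeping and the count of welds are routine; the delicate combinatorial content is entirely concentrated in proving that an ascending welded diagram is trivial.
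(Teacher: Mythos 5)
The inequality you have actually proved is not the one in the statement. In this paper (and in \cite{Li,Satoh1}) the unknotting number $u(K)$ of a welded knot is defined through \emph{crossing changes}: one switches the over/under information at a classical crossing, and $u(K)$ is the minimum number of switches needed to reach the welded unknot. Your construction never performs a crossing change; it \emph{welds} the $d(K)$ under-first crossings, and an unknotting by welds bounds the \emph{welded} unknotting number, not $u(K)$. So your argument establishes $u_w(K)\le d(K)$ --- which is exactly the proposition (with essentially the same proof: weld the warping crossings, obtain a descending diagram, invoke Satoh) that this paper states and proves at the start of Section~3.2 --- but your final sentence ``this exhibits an unknotting by $|W|=d(K)$ welds and therefore gives $u(K)\le d(K)$'' is a non sequitur. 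The gap cannot be closed by comparing the two invariants: welding a crossing and switching it are independent operations, and no inequality of the form $u(K)\le u_w(K)$ is available (the paper proves only $u_w(K)\le 2u(K)$, which points in the wrong direction for your purpose).

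The repair is small and keeps your structure intact; it is also what \cite{Li} does. At each crossing of $W$ perform a crossing change instead of a weld. A crossing met under-strand-first becomes over-strand-first after the switch, and all other crossings are unaffected, so the resulting diagram is descending in the paper's terminology (your ``ascending'' is what this paper calls descending). By Proposition~\ref{prop1} (Satoh~\cite{Satoh1}), any descending welded diagram is trivial, and since the new diagram differs from $D$ by exactly $|W|=d(K)$ crossing changes, this yields $u(K)\le d(K)$. Your inductive argument that over-first diagrams are trivial --- sliding the initial over-arc off using $R_2$, $R_3$, and $F_o$, the latter being precisely what lets an over-arc detour past virtual crossings --- is essentially a re-proof of that cited proposition; it is correct in outline, but unnecessary, since the paper already quotes the result.
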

\begin{corollary}[\cite{Li}]\label{cor1}
    A welded knot is trivial if and only if $d(K)=0$.
\end{corollary}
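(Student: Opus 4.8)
The plan is to prove both directions of the equivalence, relying on the preceding Lemma $u(K) \le d(K)$ together with the elementary fact that the warping degree is a non-negative integer that vanishes on a crossingless diagram.

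First I would handle the forward implication. Assuming $K$ is trivial, I would take the crossingless diagram $D$ of $K$, namely a single embedded circle. Since the warping degree at a base point counts the crossings first met as under-crossings while traversing the diagram, and $D$ has no crossings at all, I would observe that $d(D_a) = 0$ for every base point $a$, whence $d(D) = 0$. Because $d(K)$ is the minimum of such quantities taken over all diagrams of $K$ and is always at least $0$, this forces $d(K) = 0$.

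For the converse, I would assume $d(K) = 0$ and invoke the Lemma to obtain $u(K) \le d(K) = 0$. Since the welded unknotting number is non-negative, this yields $u(K) = 0$. Appealing to the definition of $u(K)$ as the minimal number of classical crossings that must be welded in order to reach the unknot, the value $0$ means that no crossing needs to be welded, so $K$ is already the unknot, i.e. trivial.

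The only step that warrants attention is the interpretation of $u(K) = 0$: I would make explicit that a welded unknotting number of zero forces triviality of $K$ itself, which follows immediately from the definition. Beyond this, no genuine obstacle arises; the corollary is simply a repackaging of the Lemma combined with the observation that $d$ detects precisely whether any unknotting operation is required.
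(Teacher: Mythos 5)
Your proof is correct and follows essentially the route the paper intends: the statement appears there as a citation to \cite{Li}, given without its own proof, positioned as an immediate consequence of the preceding Lemma $u(K)\le d(K)$ --- the forward direction being exactly your crossingless-diagram observation, the converse being the specialization $d(K)=0 \Rightarrow u(K)=0 \Rightarrow K$ trivial. One inaccuracy is worth flagging, though it does not sink the argument. The quantity $u(K)$ in that Lemma is the unknotting number of a welded knot in the sense of \cite{Li} and \cite{Satoh1}, i.e.\ the minimal number of \emph{crossing changes}, over all diagrams, needed to reach the trivial welded knot (this is also how the paper uses $u(K)$ in its proof that $u_w(K)\le 2u(K)$). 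It is \emph{not} the welded unknotting number $u_w(K)$ --- the minimal number of classical crossings turned into welded crossings --- which the paper only defines in the next subsection and for which the analogous inequality $u_w(K)\le d(K)$ is proved separately as a Proposition. Your converse step quietly substitutes the latter definition when you unpack what $u(K)=0$ means (``the minimal number of classical crossings that must be welded''). The substitution happens to be harmless: under either definition, the value $0$ means some diagram of $K$ is already a diagram of the trivial welded knot, so triviality follows all the same. But as written, your appeal to ``the definition of $u(K)$'' invokes the wrong definition; you should either quote the crossing-change definition from the Lemma you are using, or instead cite the paper's own Proposition $u_w(K)\le d(K)$, after which your reading of the zero value becomes literally correct.
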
	
\begin{lemma}
    Let $x$ be a classical crossing of a welded knot diagram $D$.
    We divide $D$ into two o closed paths by cutting  $D$ at $c$.
    Suppose that one of the obtained paths contains no under-crossing except $x$.
    Let $E$ be the welded knot diagram obtained from  $D$ by replacing $x$  with a welded crossing.
    Then  $D$ is related to $E$ by a finite sequence of $C1, V1-V4$, and $W$.
\end{lemma}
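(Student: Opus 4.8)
The plan is to reduce the statement to the visibly trivial fact that an isolated classical kink and an isolated virtual kink represent the same welded knot, by first using the welded move $W$ to push $x$ into such a kink. First I would fix notation: cutting $D$ at $x$ produces two closed paths, and I call $A$ the one that meets no undercrossing other than $x$, and $B$ the other. Since $A$ carries the underpassage at $x$, in the underlying arrow (Gauss) data the overendpoint $x_o$ and the underendpoint $x_u$ of $x$ are exactly the two points at which $A$ is spliced to $B$.

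The first thing I would record is that $A$ has no self-crossing. Indeed, a self-crossing of $A$ would contribute an undercrossing of $A$ distinct from $x$, contrary to the hypothesis; hence every crossing encountered along the interior of $A$ is a crossing of $A$ with $B$ at which $A$ passes \emph{over}. In arrow language this says that, reading along $A$ from $x_o$ to $x_u$, one meets only the over-endpoints (tails) of the crossings $c_1,\dots,c_k$ that $A$ makes with $B$, and the two endpoints of the arrow $x$ bracket precisely these tails on the $A$-side.

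The heart of the argument is to clear these tails from between $x_o$ and $x_u$. Because $x_o$ is itself an over-endpoint, the welded move $W$ (the $F_o$-move of Figure~\ref{fig:forbidden}) permits commuting $x_o$ past each neighbouring over-endpoint $c_i$ in turn, with the virtual moves $V1$--$V4$ used to realise each elementary slide in the plane. After $k$ such slides $x_o$ sits adjacent to $x_u$ with nothing between them, so $x$ has become an isolated classical kink while every $c_i$ survives, unchanged, as a classical crossing; call this diagram $D_1$. The very same repositioning can be performed on $E$: there $x$ is a virtual crossing, and a virtual crossing detours freely past the classical crossings $c_i$ using only $V1$--$V4$, producing a diagram $E_1$ whose $c_i$-configuration matches that of $D_1$ and which differs from $D_1$ only in that the isolated kink at $x$ is virtual rather than classical. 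Finally, removing the isolated classical kink from $D_1$ by $C1$ and the isolated virtual kink from $E_1$ by $V1$ yields one and the same diagram $D_2$, whence $D\sim D_1\sim D_2\sim E_1\sim E$, as claimed.

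The step I expect to be the main obstacle is the sliding of $x_o$ past the $c_i$ by $W$: one must check carefully, against the precise local picture of the $F_o$-move, that an over-endpoint really can be commuted past another over-endpoint, and that each elementary slide is realised in the plane by $V1$--$V4$ without inadvertently creating or destroying any $c_i$. Once this ``tails commute'' principle is pinned down, and the bookkeeping that $D_1$ and $E_1$ carry identical $c_i$-configurations is verified, the remainder of the proof is routine.
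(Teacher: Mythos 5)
Your proof is correct in substance, and a preliminary remark is in order: the paper itself states this lemma \emph{without} proof (it is imported from the literature on welded unknotting numbers, cf.\ \cite{Li} and \cite{Satoh1}), so there is no in-paper argument to compare against; your write-up supplies the missing proof. Your route --- pass to the arrow (Gauss) presentation, observe that the hypothesis forces every classical endpoint in the interior of the loop $A$ to be an over-endpoint of an $A$--$B$ crossing, slide $x_o$ past these over-endpoints by repeated $W$-moves (each planar realisation furnished by detour moves $V1$--$V4$), until $x$ becomes an isolated kink, then delete the classical kink by $C1$ on one side and the virtual kink by $V1$ on the other --- is exactly the standard argument for this statement, and the step you flag as the main obstacle does go through: in Gauss-diagram terms $W$ (the $F_o$-move) is precisely the exchange of two adjacent over-endpoints, with signs and arrow directions unrestricted because the detour moves supply all oriented/signed variants of the local picture. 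One imprecision should be fixed: from the hypothesis you may only conclude that $A$ has no \emph{classical} self-crossing; $A$ may perfectly well cross itself \emph{virtually}. This costs nothing, since virtual crossings carry no arrow in the Gauss data and are absorbed by the detour moves you already invoke, but your sentences ``$A$ has no self-crossing'' and ``every crossing encountered along the interior of $A$ is a crossing of $A$ with $B$ at which $A$ passes over'' should be weakened to refer to classical crossings only; the same caveat applies when you claim $E_1$ and $D_1$ ``carry identical $c_i$-configurations,'' which is true at the level of arrow data and up to detour moves, rather than as literal planar diagrams.
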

\begin{definition}
    A welded knot diagram  $D$ is \textit{descending} if there is a base point and an orientation of  $D$ such that, walking along  $D$ from the base point  with respect to the orientation, we meet the over-crossing for the first time and the under-crossing for the second time at every classical crossing.
\end{definition}
\begin{proposition}\cite{Satoh1}\label{prop1}
    Any descending diagram is related to the trivial diagram by a finite sequence of $R_1, vR_1$--\,$vR_4$,  and $F_o$.
\end{proposition}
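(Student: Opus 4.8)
The plan is to argue by induction on the number of classical crossings of the diagram, using the lemma immediately preceding this proposition as the engine that removes one classical crossing at a time. Concretely, given a descending diagram $D$ together with a base point $a$ and an orientation witnessing the descending condition, I would exhibit a classical crossing $x$ to which the hypothesis of that lemma applies, replace $x$ by a welded crossing to obtain a diagram $E$ having one fewer classical crossing, check that $E$ is again descending, and then invoke the inductive hypothesis on $E$. Since the lemma guarantees that $D$ and $E$ are related by a finite sequence of $R_1$, $vR_1$--$vR_4$, and $F_o$, composing these with the moves supplied by the inductive hypothesis expresses $D$ as related to the trivial diagram using exactly the advertised repertoire of moves.

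The heart of the argument is the combinatorial choice of $x$. Traversing $D$ from $a$ along the chosen orientation, I would record the sequence of crossing encounters; each classical crossing is met exactly twice, and the descending hypothesis says that its over-passage precedes its under-passage in this sequence. Let $x$ be the crossing whose under-passage is the \emph{first} under-crossing met along the traversal. Then every encounter lying strictly between the over-passage and the under-passage of $x$ occurs before that first under-crossing, hence is itself an over-passage. Cutting $D$ at $x$ splits it into two closed paths, and the path consisting of this intermediate segment therefore contains no under-crossing other than $x$ itself, which is precisely the hypothesis of the preceding lemma. Replacing $x$ by a welded crossing alters neither the over/under data at any remaining classical crossing nor the order in which the crossings are met, so the same base point and orientation show that $E$ is descending, and the induction continues.

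For the base case I would take a diagram with no classical crossings, that is, one carrying only virtual (welded) crossings; such a diagram records no classical crossing information and is well known to be reducible to the trivial diagram using only $vR_1$--$vR_4$, which closes the induction. The step I expect to demand the most care is the bookkeeping that verifies the chosen $x$ genuinely meets the lemma's hypothesis in every configuration and that the intermediate closed path is correctly identified regardless of where the over-passage of $x$ falls relative to the base point, as well as the clean verification that the descending property is inherited by $E$. Once the preceding lemma is granted, however, the remaining difficulty is entirely this discrete bookkeeping rather than any further geometric input.
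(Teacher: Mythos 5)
Your proof is correct, and it is essentially the argument this proposition rests on: the paper itself states the result without proof (citing Satoh), and the lemma it records immediately beforehand --- cutting $D$ at a classical crossing $x$ so that one loop has no under-crossing except $x$, then welding $x$ using $R_1$, $vR_1$--$vR_4$, $F_o$ (Satoh's $C1$, $V1$--$V4$, $W$) --- is precisely the engine for the induction you describe. Your key bookkeeping steps are sound: the crossing carrying the first under-passage has its over-passage earlier by the descending hypothesis, so the intermediate loop consists only of over-passages and meets the lemma's hypothesis; welding it preserves the descending property for the same base point and orientation; and the purely virtual base case reduces to the trivial diagram by $vR_1$--$vR_4$ alone.
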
 
\subsection{Welded unknotting number}

\begin{definition}
    For any welded knot diagram $D$, its \emph{welded unknotting number}, denoted by $u_{w}(D)$, is defined as the minimum number of classical crossings that need to be changed to welded crossings such that the resultant diagram becomes a diagram of the unknot.\par
    The \emph{welded unknotting number of} $K$, denoted by $u_{w}(K)$, is defined by
    \[ u_{w}(K) \coloneqq \min_{D}\{u_{w}(D) \mid D \text{ is a diagram of } K\}. \]
\end{definition}
\begin{proposition}
    For every welded knot $K$, the welded unknotting number does not exceed the warping degree, i.e.,
    \[ u_{w}(K) \leq d(K). \]
\end{proposition}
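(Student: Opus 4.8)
The plan is to establish the inequality $u_w(K) \leq d(K)$ by producing, from any diagram realizing the warping degree, a sequence of crossing-to-weld changes that unknots the diagram, and whose length is at most the warping degree. The key conceptual link is that both quantities measure how far a diagram is from being ``descending,'' and Proposition~\ref{prop1} tells us that a descending diagram is weldedly trivial. So the strategy reduces to showing that welding the ``bad'' crossings turns an arbitrary diagram into a descending one.

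More concretely, let me fix a diagram $D$ of $K$ together with a base point $a$ and orientation achieving $d(D_a) = d(K)$ (after possibly passing to $-D$). Walking along $D$ from $a$, the warping degree counts precisely those crossings at which I first encounter the \emph{under}-strand rather than the over-strand; call these the bad crossings, and note there are exactly $d(D_a)$ of them. The first step is to observe that if I convert each such bad classical crossing into a welded crossing, then at every \emph{remaining} classical crossing I meet the over-strand before the under-strand along the orientation --- that is, the resulting diagram $D'$ is descending in the sense of the definition preceding the proposition. The second step is to invoke Proposition~\ref{prop1}: since $D'$ is descending, it is related to the trivial diagram by $R_1$, $vR_1$--$vR_4$, and $F_o$ moves, hence represents the welded unknot. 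Therefore $d(K) = d(D_a)$ welding operations suffice to unknot $D$, which by definition gives $u_w(D) \leq d(D_a) = d(K)$, and then $u_w(K) \leq u_w(D) \leq d(K)$.

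The step I expect to be the main obstacle is the first one: verifying carefully that welding exactly the bad crossings produces a genuinely descending diagram. The subtlety is that welding a crossing destroys the over/under information there, so I must check that at each surviving classical crossing the \emph{first} encounter is still an over-crossing. Because the bad crossings were precisely those where the first encounter was an under-crossing, welding them removes exactly the obstructions to the descending condition, while the good crossings are untouched and already satisfy it. I would phrase this as a direct consequence of the definitions, taking a little care that ``first encounter'' is read consistently between the warping-degree definition and the descending definition. Once this combinatorial bookkeeping is pinned down, the rest of the argument is formal, and the inequality follows immediately.
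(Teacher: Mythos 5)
Your proposal is correct and follows essentially the same route as the paper's proof: weld exactly the crossings first met as under-crossings in a diagram realizing $d(K)$, observe that the result is descending, and invoke Satoh's result (Proposition~\ref{prop1}) that a descending welded diagram is trivial. The extra care you take in verifying that the welded diagram is genuinely descending is exactly the (implicit) content of the paper's argument, so there is nothing to add.
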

\begin{proof}
    Let $d(K) = r$.
    Suppose $D_a$ is an oriented welded knot diagram of $K$ with a base point $a$ that realizes the warping degree of $K$, i.e. $d(D_a) = r$.
    It implies that if we follow $D_a$ along its orientation, then we first meet the under-crossings at exactly $r$ number of crossing points of $D_a$.
    By transforming these $r$ crossings into welded crossings, we obtain a descending welded knot diagram, which is in fact a trivial welded knot diagram, as proved by Satoh~\cite[Proposition 2.2]{Satoh1}.
    Thus $u_{w}(K) \leq r = d(K)$.
\end{proof}
\begin{proposition}
    For every welded knot $K$, the welded unknotting number does not exceed twice of the unknotting number, i.e.,
    $$ u_{w}(K) \leq 2 u(K). $$
\end{proposition}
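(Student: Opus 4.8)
The plan is to reduce the global statement to a single local move and then apply it along an unknotting sequence. Concretely, I would first establish the local claim that \emph{one crossing change can be realized, up to welded equivalence, by welding two crossings}: for any classical crossing $c$ of a welded diagram $D$, the diagram $D_c$ obtained by switching the over/under strands at $c$ is welded-equivalent to a diagram produced from $D$ by converting two crossings into welded ones. Granting this, the proof concludes quickly. Choose a diagram $D$ of $K$ realizing $u(K)=n$, so that switching $n$ crossings $c_1,\dots,c_n$ of $D$ yields the unknot. Since the local move is supported in a small disk around each $c_i$, I can perform it at $c_1,\dots,c_n$ independently, trading each of the $n$ crossing changes for two welds. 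The resulting diagram is obtained from $D$ by welding $2n$ crossings and is welded-equivalent to the unknot, whence $u_w(K)\le 2n=2u(K)$.

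The heart of the argument is therefore the local move, and here the guiding observation is that welding a crossing destroys its over/under data: welding $c$ produces literally the same diagram whether one starts from $D$ or from $D_c$, so $\mathrm{weld}_c(D)=\mathrm{weld}_c(D_c)$ as diagrams. This means $D$ and $D_c$ become identified after a single weld at $c$; the remaining task is to pay one more weld in order to recover, up to welded equivalence, the switched crossing rather than a welded one. To do this I would introduce by a Reidemeister~$R_2$ move two auxiliary crossings flanking $c$, weld this pair, and then use the over-forbidden move $F_o$ together with the virtual (detour) moves $vR_1$--$vR_4$ to slide the freed strand across the welded crossings, emerging with the over/under information at $c$ reversed. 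The bigon created by $R_2$ is what supplies the second welded crossing, and is the source of the factor of two.

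The step I expect to be the main obstacle is precisely the verification of this local move: arranging the $R_2$ bigon so that, after welding its two crossings, the $F_o$ and detour moves genuinely reverse the crossing at $c$ (rather than merely returning $\mathrm{weld}_c(D)$), and checking that the whole sequence stays within the allowed welded moves. I would verify it by tracking the strands through the explicit local picture, and sanity-check the outcome on small examples --- for instance the trefoil, where welding two of the three crossings leaves a single classical crossing that reduces by $R_1$ to the trivial diagram, consistent with $u_w\le 2=2u$. Once the local move is secured, the packaging into the global bound is routine, since crossing changes at distinct crossings occur in disjoint neighborhoods and thus do not interfere.
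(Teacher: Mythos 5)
Your global strategy is exactly the paper's: choose a diagram $D$ realizing $u(K)=n$, apply an $R_2$ move in a small disk near each crossing $c_i$ of the unknotting sequence, trade each crossing change for two weldings, and conclude $u_w(K)\le 2n$ because the local modifications have disjoint supports. The gap is in the local move, which you correctly identify as the heart of the argument but then set up with the wrong pair of crossings. If you create the bigon by $R_2$ and then weld \emph{its two} crossings --- which is what ``weld this pair'' says, and what your own self-check ``after welding its two crossings'' confirms --- the move cannot work: the finger created by the $R_2$ now meets the rest of the diagram only in welded crossings, so the detour move simply retracts it, and your local tangle is welded-equivalent to the original crossing, i.e.\ to $D$ itself with $c$ intact (not even to $\mathrm{weld}_c(D)$, and certainly not to $D_c$). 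No subsequent $F_o$ or detour manipulation can repair this: welded equivalence is an equivalence relation, so if this diagram were also equivalent to $D_c$, then $D$ and $D_c$ would be welded-equivalent for every diagram and every crossing, and since crossing changes unknot every welded knot, all welded knots would be trivial --- false, as the welded trefoil has non-abelian group. Note also that your trefoil sanity check does not probe this point: welding two crossings of the standard trefoil diagram yields the unknot for reasons independent of your local move, so the test would pass even with the broken move.

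The correct pairing --- the one the paper uses, and the one your first paragraph's ``weld $c$, then pay one more weld'' framing actually points to --- is to weld $c$ \emph{together with one} of the two new crossings, leaving the other new crossing classical. For this to realize the crossing change, the $R_2$ must be performed so that the two new crossings carry the reversed over/under data (the under-strand of $c$ pushed \emph{over} the over-strand); then the surviving classical crossing is precisely the switched crossing, and the arc passing through the two welded crossings ($c$ and its neighbour) slides away by detour moves, leaving $D_c$ locally. With that substitution your argument becomes the paper's proof: the $n$ simultaneous $R_2$ moves produce a diagram $D'$ of $K$, welding the $2n$ crossings $\{c_i,\ \text{one adjacent }R_2\text{-crossing}\}$ gives a diagram welded-equivalent to the unknotted diagram $\widetilde{D}$, and hence $u_w(K)\le u_w(D')\le 2n=2u(K)$.
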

\begin{proof}
    Let $u(K) = n$.
    Then there exists a welded knot diagram $D$ representing $K$ with crossings $c_1, c_2, \ldots, c_n$ on $D$ such that by switching all $c_i$'s, we obtain a trivial welded knot diagram, say $\widetilde{D}$.
    Apply an $\mathrm{RII}$-move locally in a neighborhood of $c_i$ for each $i=1,2,\ldots,n$ to obtain another diagram $D'$ of $K$, as shown in Figure~\ref{fig:cc_weld}.
    Observe that if we change both $c_i$ and its adjacent crossing to welded crossings, we obtain a diagram that is welded equivalent to $\widetilde{D}$.
    This implies $u_w(K) \leq u_w(D') \leq 2n = 2 u(K)$.
    \begin{figure}[h]
	\centering 
	\includegraphics[scale=1]{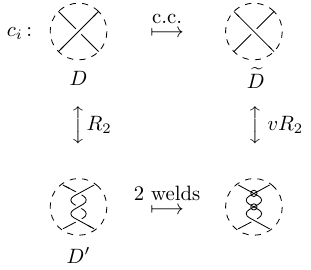}
	\caption{Realizing a crossing change by two weldings.}
	\label{fig:cc_weld}
    \end{figure}
\end{proof}
Let $T_n$ denotes the twist knot with $n$ half-twists having $n+2$ crossings.
\begin{proposition}
$T_n$ has welded unknotting number $1$ for all $n$.  
\end{proposition}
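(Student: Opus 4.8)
The plan is to establish the two inequalities $u_w(T_n)\ge 1$ and $u_w(T_n)\le 1$ separately. For the lower bound I would first note that $u_w(K)=0$ holds precisely when $K$ is the trivial welded knot, since by definition $u_w=0$ means some diagram of $K$ is already a diagram of the unknot. Now $T_n$ is a nontrivial classical knot, so its knot group $G(T_n)$ — a welded invariant — is non-abelian and in particular not isomorphic to $\mathbb{Z}$; hence $T_n$ cannot be the trivial welded knot. (Equivalently, by Corollary~\ref{cor1} a welded knot is trivial iff $d(K)=0$, while $d(T_n)\ge u(T_n)=1$.) Either way $u_w(T_n)\ge 1$.

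For the upper bound the key is to produce, for each $n$, a diagram $D_n$ of $T_n$ together with a base point and orientation at which \emph{exactly one} crossing is first encountered as an undercrossing. Once such data are in hand, welding that single offending crossing turns $D_n$ into a descending welded diagram: every remaining classical crossing is, by the choice of base point, first met as an overcrossing, while the offending crossing is no longer classical. By Proposition~\ref{prop1} a descending diagram is welded-trivial, so this one welding unknots $T_n$, giving $u_w(T_n)\le 1$. Combined with the lower bound this yields $u_w(T_n)=1$. Geometrically this is exactly the expected mechanism: the clasp is what locks the $n$ twists in place, and welding one crossing there frees the twist region to unravel.

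The construction of $D_n$ uses the standard alternating twist-knot diagram, consisting of a $2$-crossing clasp together with the $n$-crossing twist region, so that its Gauss code is alternating of length $2(n+2)$. I would choose the base point just before the overpass of the clasp strand and orient $D_n$ so that the traversal runs along that strand first. The claim to verify is that with this choice exactly one crossing has its under-passage preceding its over-passage in the cyclic order. I would confirm this directly for the trefoil $T_1$ and the figure-eight $T_2$ — the latter being precisely the welded example already computed in the excerpt — and then handle the general case by induction on $n$, inserting one further twist crossing and checking that it is always first met as an overcrossing, so that the count of first-under crossings stays equal to $1$.

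The main obstacle I anticipate is exactly this last combinatorial step: pinning down the Gauss code of $T_n$ uniformly and proving that the warping degree stays $1$ as $n$ grows, rather than accumulating extra first-under crossings in the twist region. Care is needed because the over/under pattern along an alternating diagram forces the traversal to alternate $O,U,O,U,\dots$, so one must check that the labelling of the twist crossings is arranged so that each twist crossing's overpass is visited before its underpass under the chosen base point and orientation; fixing the base point at the clasp and orienting outward is what makes this work, and this is the step I would accompany with an explicit figure.
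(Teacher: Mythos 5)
Your lower bound is fine, and the mechanism you propose for the upper bound---weld the unique crossing that is first met as an undercrossing, observe that the resulting diagram is descending, and invoke Proposition~\ref{prop1}---is sound in principle; it is exactly the mechanism behind the paper's inequality $u_w(K)\le d(K)$. The gap is the combinatorial claim needed to apply it: for $n\ge 3$ the standard alternating twist-knot diagram has \emph{no} base point, in either orientation, at which exactly one crossing is first encountered as an undercrossing. Take $n=3$, i.e.\ the knot $5_2$, whose standard diagram has cyclic Gauss sequence $O_a\,U_d\,O_b\,U_a\,O_c\,U_e\,O_d\,U_b\,O_e\,U_c$: walking from the ten possible base points gives first-under counts $2,3,2,3,2,3,2,3,2,3$, and the reversed orientation likewise has minimum $2$. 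So the warping degree of this diagram is $2$, and welding one crossing leaves at least one first-under classical crossing from every base point, so the welded diagram is never descending (it is in fact welded-trivial, but your static argument cannot detect this). The obstacle you flagged at the end is precisely what kills the induction: in an alternating twist region the crossings alternate over/under along each strand, so from any base point roughly half of the $n$ twist crossings --- not a bounded number --- are first met as undercrossings, and every second inserted twist crossing increases the count. (For reduced alternating diagrams one even has the identity $d(D)+d(-D)+1=c(D)$, so the two orientations' warping degrees sum to $n+1$ here and cannot both stay equal to $1$.)

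This is why the paper does not argue via a static count on a fixed diagram. Its proof welds one fixed crossing to form $T_{n,1}$ and then applies welded \emph{moves} --- the detour move, the forbidden move $F_o$, and $R_1$ --- to show that $T_{n,1}$ is welded-equivalent to $T_{n-2,1}$; iterating reduces to $T_{1,1}$ or $T_{2,1}$, and only for these base cases $n=1,2$ (exactly the cases where your claim is true) does it use the warping-degree/descending argument of Corollary~\ref{cor1}, as in Figures~\ref{fig:trefoil} and~\ref{fig:figure83}. To repair your proof you would need either such a sequence of welded equivalences performed after the weld, or a non-standard diagram of $T_n$ whose warping degree is $1$; the standard alternating diagram cannot do the job.
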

\begin{proof}
    Note that $T_1$ is the trefoil and $T_2$ is the figure eight knot.
    We first prove that welding one single crossing at any position in $T_n$ for $n=1$ and $2$ yields a welded knot equivalent to the  unknot.
    In Figure \ref{fig:trefoil} and \ref{fig:figure83} we can see that the warping degree for trefoil and figure eight knot with one welding turns out to be zero.  Therefore by Corollary~\ref{cor1}  both of them are unknots.
    \begin{figure}[h]
        \centering
        \includegraphics[scale=0.8]{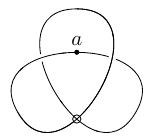}
	\caption{$d(D_{a})=0$ for the diagram of trefoil with one weld.}
	\label{fig:trefoil}
    \end{figure}
    \begin{figure}[h]
        \centering
        \includegraphics[width=0.18\textwidth]{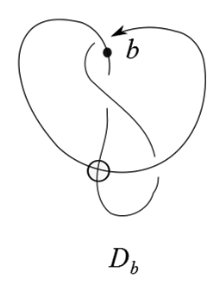}
        \caption{$d(D_{b})=0$ for the diagram of the figure-eight knot with one weld.}
        \label{fig:figure83}
    \end{figure}
    \begin{figure}[h]
        \centering
	\includegraphics[width=0.8\textwidth]{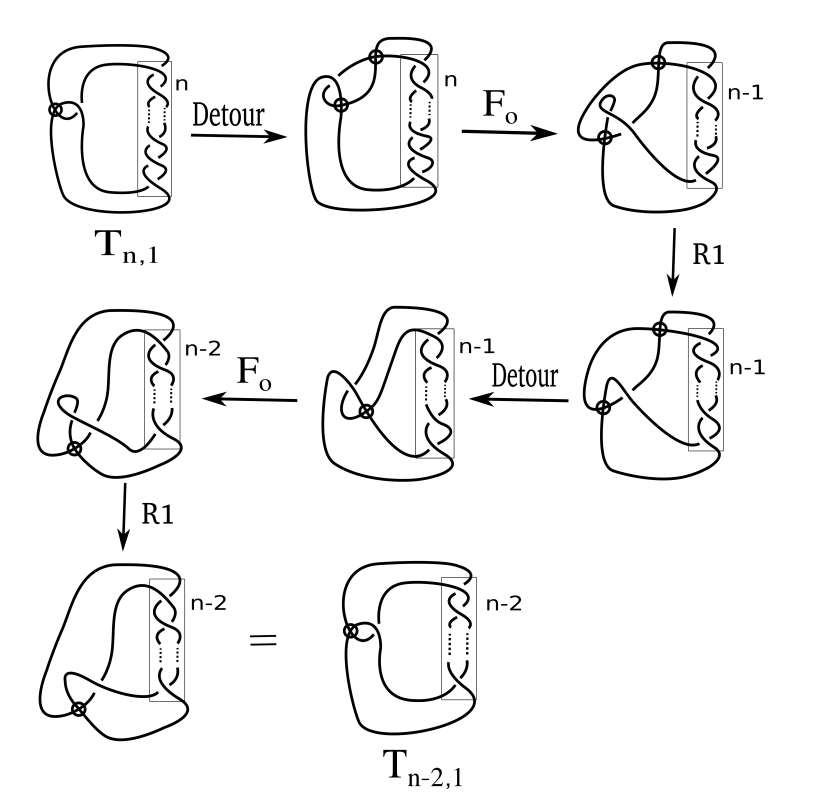}
	\caption{Triviality of the twist knot with one weld.}
	\label{fig:unknot}
    \end{figure}
    Let the standard diagram of twist knot be also denoted by $T_{n}$. Denote the welded diagram  obtained by   welding the left most crossing in $T_{n}$ by
    $T_{n, 1}$ as shown in Figure~\ref{fig:unknot}). In the Figure \ref{fig:unknot}, we have shown that $T_{n,1}$ is equivalent to $T_{n-2, 1}$ using detour move, forbidden move $1 $ and first Reidemeister move. Applying this process multiple times we will get $T_{2, 1}$ (if $n$ is odd) or $T_{1, 1}$ (if $n$ is even) which are shown to be unknot.
    \end{proof}
	
\begin{proposition}
    Let $K(2, 2n+1)$ denote the torus knot of type $(2, 2n+1)$. Then
    $$u_{w}(K(2,2n+1)) \leq u(K(2,2n+1))= n .$$
\end{proposition}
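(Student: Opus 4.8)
The plan is to establish the two assertions in the statement separately: the equality $u(K(2,2n+1)) = n$, which is classical, and the new inequality $u_{w}(K(2,2n+1)) \le n$, for which I would exploit the special structure of the standard $2$-braid diagram together with the bound $u_{w}(K) \le d(K)$ proved just above.

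For the inequality, let $D$ be the standard diagram of $K(2,2n+1)$, namely the closure of the $2$-braid $\sigma_{1}^{2n+1}$, with crossings $c_{1}, c_{2}, \ldots, c_{2n+1}$ read from top to bottom and all of the same sign. First I would fix a base point $a$ and the downward orientation, chosen so that the first crossing encountered is met as an over-crossing, and then trace the single component all the way around. Since $2n+1$ is odd, one descent through the braid carries a strand from the left endpoint to the right endpoint; the closure returns it to the top and a second descent completes the knot, so each crossing $c_{i}$ is met twice and its \emph{first} encounter occurs during the first descent. Along that descent the strand alternates between the two positions, and since all crossings have the same sign and we start on an over-passage, the first encounters alternate over, under, over, under, $\ldots$; thus $c_{i}$ is met first as an under-crossing exactly when $i$ is even. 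The even indices in $\{1,\ldots,2n+1\}$ are $2,4,\ldots,2n$, so the warping degree satisfies $d(D_{a}) = n$. Hence $d(K(2,2n+1)) \le n$, and the earlier proposition $u_{w}(K) \le d(K)$ yields $u_{w}(K(2,2n+1)) \le n$. Concretely, welding precisely the $n$ even-indexed crossings turns $D$ into a descending diagram, which is trivial by Proposition~\ref{prop1}, giving the bound directly.

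For the equality $u(K(2,2n+1)) = n$, the upper bound $u \le n$ is immediate: switching any $n$ of the $2n+1$ crossings replaces $\sigma_{1}^{2n+1}$ by a word whose exponent sum in the single generator $\sigma_{1}$ is $1$, and the closure of $\sigma_{1}^{\pm 1}$ is the unknot. The lower bound $u \ge n$ is the classical computation via the signature (equivalently via the slice genus) of $(2,q)$ torus knots, which I would simply cite. I expect the main obstacle to be the bookkeeping in the warping-degree count: one must trace the closed curve carefully and verify the over/under pattern against the fixed crossing convention of Figure~\ref{fig:crossings}, since a sign or base-point error would shift the parity and report $n+1$ rather than $n$ under-at-first-encounter crossings. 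Once that combinatorial check is done correctly, everything else reduces to the already-established inequality $u_{w} \le d$ and to standard facts about the unknotting number of $(2,q)$ torus knots.
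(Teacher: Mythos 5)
Your proposal is correct and follows essentially the same route as the paper: the paper reads off from the Gauss code of the standard diagram that the $n$ even-position crossings are exactly those met first as under-crossings, welds them to obtain a descending diagram (trivial by Proposition~\ref{prop1}, equivalently of warping degree $0$ by Corollary~\ref{cor1}), and cites Kronheimer--Mrowka for $u(K(2,2n+1))=n$. Your detour through the inequality $u_w(K)\le d(K)$ is just an invocation of the earlier proposition whose proof is this very construction, so the two arguments coincide in substance.
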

\begin{proof}
    We denote the standard oriented diagram of $K(2,2n+1)$ by $K(2,2n+1)$ itself. See Figure \ref{fig:Dk}.
    The classical unknotting number of $K(2,2n+1)$ is $\frac{(2-1)(2n+1-1)}{2}= n$, as proved by Kronheimer and Mrowka~\cite{KM}.
    \begin{figure}[h]
	\includegraphics[scale=0.75]{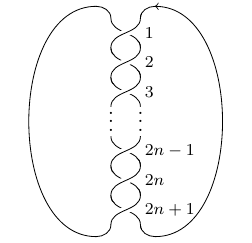}
	\caption{The standard diagram of the torus knot $K(2,2n+1)$.}
	\label{fig:Dk}
    \end{figure}
    The Gauss code for $K(2,2n+1)$ in Figure \ref{fig:Dk} is 
    \[ O1\;U2\;O3\;U4 \cdots \; O(2n-1)\;U2n\;O(2n+1)\;U1\;O2\;U3\;O4\cdots O2n\;U(2n+1)\]
    Now, if we take any base point on $K(2,2n+1)$ and go along the orientation then we will meet $n$  number of under-crossings at the first encounter with that crossing. And these under-crossings will be on alternate positions.\\
    Now, we will weld $n$ number of alternate crossings, say at $\nth{2},\nth{4},\cdots,2n^{th}$ crossing from the top as shown in Figure \ref{fig:torusalt}. We denote this oriented diagram by $D_{n}'$.
    \begin{figure}[h]
        \centering
	\includegraphics[scale=0.8]{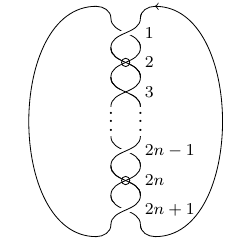}
	\caption{$K(2,2n+1)$ with $n$ welding at alternate crossings.}
	\label{fig:torusalt}
    \end{figure}
    Now, the Gauss code of this welded oriented diagram is as follows
    \[O1\,O3\,\cdots\,O(2n-1)\,O(2n+1)\,U1\,U3\,\cdots\,U(2n-1)\,U(2n+1)\]
    So, we get a descending diagram and by Proposition \ref{prop1}, it will be equivalent to an unknot.
    Another way to prove this is using warping degree. As we can see in the Figure \ref{fig:torusalt},
    the warping degree of $D_{n}'$ for base point a is $0$. Therefore $d(K(2,2n+1))=d(D_{n}')=0$. By Corollary~\ref{cor1}, $D_{n}'$ is an unknot.
    Both the arguments imply that the welded unknotting  number $u_{w}(K(2,2n+1)$ does not exceed $u(K(2,2n+1)$.
\end{proof}

\section{Fundamental group of welded Knots  obtained from the classical knot  diagrams}

In this section we consider the standard diagrams of two important families of knots, namely the torus knots of type $(2,2n+1)$ denoted by $K(2,2n+1)$ and the twist knots with $n$ half twists denoted by $T_n$. These knots are non-trivial and therefore have their knot groups  not isomorphic to $\mathbb{Z}$. Thus by Satoh's  result their tubes will be non-trivial knotted ribbon tori. We would like to see the impact on  fundamental groups when we weld some crossings in these diagrams.

\subsection{ Welded knots arising from  $K(2,2n+1)$}

\subsubsection{$K(2,2n+1)$ with one welding:}

Let $K_{n,1}$ be the welded knot represented by the diagram  shown in Figure~\ref{fig:torus1}, which is obtained from $K(2,2n+1)$ by welding one crossing.  Let $G(K_{n,1})$ denotes the fundamental group of $K_{n,1}$.
 \begin{figure}
	\centering
	\includegraphics[scale=0.75]{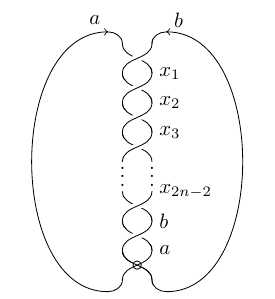}
	\caption{$K(2,2n+1)$ with one weld.}
	\label{fig:torus1}
    \end{figure}

\begin{proposition}
  
 $G(K_{n,1}) \cong \Z$.
   
\end{proposition}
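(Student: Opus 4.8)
The plan is to compute $G(K_{n,1})$ directly from its Wirtinger presentation, show the resulting group is abelian, and then invoke the fact that the abelianization of any welded knot group is infinite cyclic. The latter holds because the diagram represents a single component: abelianizing each crossing relation identifies the incoming and outgoing under-arcs, and travelling once around the knot (passing straight through every welded crossing) identifies all arc generators, leaving a single free generator. So the whole task reduces to proving commutativity of $G(K_{n,1})$.

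First I would read the Wirtinger presentation of $K(2,2n+1)$ off the Gauss code recorded above. Labelling the $2n+1$ arcs $g_1,\dots,g_{2n+1}$ so that $g_j$ is the arc carrying the over-passage $Oj$, every classical crossing relation takes the ``staircase'' form $g_{i+1}=g_i\,g_{i-1}\,g_i^{-1}$, equivalently $g_{i+1}g_i=g_ig_{i-1}$, supplemented by the two closing relations $g_2=g_1 g_{2n+1}g_1^{-1}$ and $g_1=g_{2n+1}g_{2n}g_{2n+1}^{-1}$ coming from the crossings at the seam of the diagram. I would then incorporate the weld. By the rotational symmetry of the standard diagram a single weld at any crossing yields welded-equivalent diagrams, so I may weld the last crossing. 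At a welded (virtual) crossing the under-strand is no longer severed and no relation is imposed, so the only effect on the presentation is to (a) identify the two arcs abutting that crossing, here $g_{2n}=g_1$, and (b) delete the Wirtinger relation $g_1=g_{2n+1}g_{2n}g_{2n+1}^{-1}$.

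Next I would exploit the staircase. Setting $z:=g_2g_1$, the relations $g_{i+1}g_i=g_ig_{i-1}$ force $g_{i+1}g_i=z$ for every admissible $i$, which solves each generator explicitly as $g_{2k+1}=z^{k}g_1 z^{-k}$ and $g_{2k}=z^{k}g_1^{-1}z^{1-k}$; in particular $G(K_{n,1})$ is generated by $a:=g_1$ and $z$. Substituting $k=n$ into these formulas turns the two surviving special relations ($g_{2n}=g_1$ and $g_2=g_1g_{2n+1}g_1^{-1}$) into $a z^{-n}a=z^{1-n}$ and $a z^{n}a=z^{n+1}$. From the first relation one gets $z^{n}=a z^{n-1}a$, and combining this with the second by computing $z^{n+1}(z^{n})^{-1}$ gives $aza^{-1}=z$. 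Thus $a$ and $z$ commute, $G(K_{n,1})$ is abelian, and by the remark of the first paragraph $G(K_{n,1})\cong\Z$. (Concretely, once abelian the second relation reads $a^2=z$, so the group is cyclic on $a$.)

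I expect the main obstacle to be bookkeeping rather than depth: reading the crossing relations off the Gauss code with a consistent orientation/sign convention, and correctly recording how a single weld alters the generators and relations (merging two arcs while deleting exactly one relation). The conceptual heart is the observation that the staircase relations collapse to a single constant product $z$, which both trivializes all generators in terms of $a$ and $z$ and makes the two residual relations combine cleanly into the commutator identity $aza^{-1}=z$.
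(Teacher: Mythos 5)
Your proposal is correct and takes essentially the same route as the paper: both compute the Wirtinger presentation of the once-welded diagram, reduce it to two generators and two relations, and show by elementary manipulation that the relations force commutativity and collapse the group to $\Z$. Your substitution $z=g_2g_1$ and the final appeal to the abelianization of a welded knot group are only cosmetic variants of the paper's direct derivation of $ab=ba$ and then $a=b$ from the relations $b(ba)^n=(ba)^na$ and $ab(ba)^{n-1}=b(ba)^{n-1}b$.
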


\begin{proof}
    The Wirtinger relations at the crossings of the diagram $D$ whose arcs are labeled as shown in Figure \ref{fig:torus1} are given by
    \begin{align*}
        x_k & = 
        \begin{cases}
            (ba)^{\frac{k+1}{2}} a (ba)^{-{\frac{k+1}{2}}}  & \text{if $k$ is odd, $k\leq 2n-3$}\\
            (ba)^{\frac{k}{2}} b (ba)^{-{\frac{k}{2}}}      & \text{if $k$ is even, $k\leq 2n-2$}
        \end{cases}\\
        b & = x_{2n-2} x_{2n-3} x_{2n-2}^{-1}\\
        & = (ba)^n a (ba)^{-n}\\
        a & = b x_{2n-2} b^{-1}\\
        & = b (ba)^{n-1} b (ba)^{-(n-1)} b^{-1}.
    \end{align*}
    Then a Wirtinger presentation of $G(K_{n,1})$ is given by
    \begin{equation*}
        G(K_{n,1}) = \LL a,b: b(ba)^n = (ba)^n a,\ ab(ba)^{n-1} = b(ba)^{n-1}b \RR.
    \end{equation*}
    Observe that $ab(ba)^{n-1} = b(ba)^{n-1}b = b(ba)^n (ba)^{-1}b = (ba)^n a (ba)^{-1}b = (ba)^n$ implies $ab = ba$, which is a consequence of the relations in $G(K_{n,1})$.
    Further, $(ba)^n a = b(ba)^n = b(ab)^n = (ba)^n b$ implies $a=b$.
    Thus $G(K_{n,1}) \cong \LL a: \RR$, which is isomorphic to $\Z$.
\end{proof}

\noindent{Remark:}  Thus in the above situation we cannot predict if the welded knot or the tube associated to it is trivial   or non-trivial.

\subsubsection{$K(2,2n+1)$ with two weldings:}

We start with the standard diagram $K(2,2n+1)$. Welding two crossings divide the diagram in two parts with $m_{1}$ and $m_{2}$ number of classical crossings as shown in the Figure \ref{fig:randomgap}.
Then
\[m_{1}+m_{2}=(2n+1)-2=2n-1 (\text{odd}).\]
We denote this welded knot diagram as $K_{n,2,m_{1}}$ (Figure~\ref{fig:randomgap}).
Now, because of the cyclic nature of the standard diagram of $K(2,2n+1)$ the following pairs of $\{m_{1},m_{2}\}$ represent the same welded diagram:
\begin{align*}
    \{i,j\} & \; \text{and} \; \{j,i\} \quad \text{for } i,j=0,1,\cdots, 2n-1.
\end{align*}
So, we only consider the values $m_{1}=0,1,2,\cdots,(n-1)$ for $K_{n,2,m_{1}}$. Now,$K_{n,2,0}$ is equivalent to the classical diagram of $K(2,2n-1)$ under virtual Reidemeister move $vR_{2}$ (Figure \ref{zerogap}), which has non-$\mathbb{Z}$ knot group.
Let $G(K_{n,2,m_{1}})$ denote the (combinatorial) knot group of the welded knot $K_{n,2,m_{1}}$ where $0 \leq m_{1}\leq (n-1)$.
 \begin{figure}[h]
    \centering
    \includegraphics[scale=0.75]{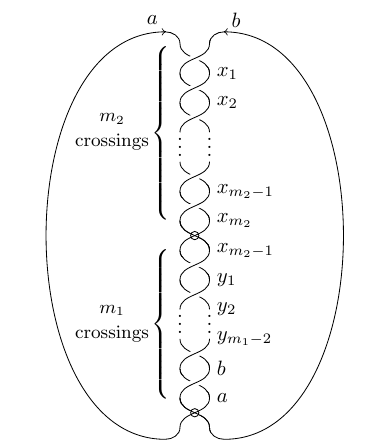}
    \caption{$K(2,2n+1)$ with two welds and arbitrary gap.}
    \label{fig:randomgap}
\end{figure}
\begin{proposition}
    \begin{align*}
        G(K_{n,2,m_{1}})  & \cong \mathbb{Z} \quad \text{when} \quad m_{1}=n-1\\
    G(K_{n,2,m_{1}})  &\ncong \mathbb{Z} \quad \text{when} \quad 1 \leq m_{1} < n-1.
    \end{align*}
\end{proposition}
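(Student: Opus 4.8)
The plan is to compute a Wirtinger presentation of $G(K_{n,2,m_{1}})$ directly from Figure~\ref{fig:randomgap}, following the template of the one-welding computation, and then to identify the resulting two-generator group with the group of the torus knot $K(2,2n-1-2m_{1})$. First I would exploit the two-block structure of the diagram: the two welded crossings split the $2n-1$ classical crossings into a block of $m_{1}$ and a block of $m_{2}=2n-1-m_{1}$ crossings, and inside each block the two strands twist around one another exactly as in the standard $(2,q)$ diagram. Taking the two arcs $a,b$ that enter the first block as generators, every arc inside a block is then a conjugate of $a$ or $b$ by a power of $ba$, reproducing the pattern $x_{k}=(ba)^{j}a(ba)^{-j}$ already used for one welding. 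The welded crossings themselves carry no Wirtinger relation; their only role is to reconnect the exit strands of one block to the entrance strands of the next.

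Transporting the two blocks once around the closure yields exactly two defining relations, one recording the conjugation across the $m_{1}$-block and one across the $m_{2}$-block. Eliminating the intermediate generators, I expect these to collapse, after Tietze moves, to the single torus relation $(ab)^{k}a=(ba)^{k}b$ with $k=n-1-m_{1}$; equivalently $G(K_{n,2,m_{1}})\cong G(K(2,2n-1-2m_{1}))$. The decisive feature is that the exponent should be governed by the \emph{difference} $m_{2}-m_{1}=2n-1-2m_{1}$ of the block lengths rather than their sum: at the two welded crossings the over/under roles of the strands are interchanged, so the two block-contributions enter the combined relation with opposite sense and their exponents subtract. This is consistent with the already-noted boundary value $K_{n,2,0}\cong K(2,2n-1)$, and it controls both cases at once, since $2n-1-2m_{1}=1$ precisely when $m_{1}=n-1$ and is an odd integer $\ge 3$ precisely when $1\le m_{1}<n-1$.

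When $m_{1}=n-1$ we have $k=0$, so the torus relation degenerates to $a=b$ and $G(K_{n,2,n-1})\cong\LL a:\RR\cong\Z$. I would present this as the same short chain of substitutions that appears in the one-welding proposition, where one relation forces $ab=ba$ and the other then forces $a=b$.

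When $1\le m_{1}<n-1$ the exponent $q:=2n-1-2m_{1}$ is an odd integer at least $3$, and it remains to certify $G\ncong\Z$. Abelianisation is useless here, since every knot group abelianises to $\Z$, so instead I would exhibit a non-abelian quotient. Sending $a\mapsto s$ and $b\mapsto sr$, where $r,s$ generate the dihedral group of order $2q$ with $s^{2}=r^{q}=1$ and $srs=r^{-1}$, the relation $(ab)^{k}a=(ba)^{k}b$ becomes $r^{k}s=r^{-k-1}s$, which holds because $r^{2k+1}=r^{q}=1$; the image contains $s$ and $sr$, hence the whole dihedral group, which is non-abelian as $q\ge 3$. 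Thus $G$ surjects onto a non-abelian group and cannot be $\Z$. Alternatively, the Alexander polynomial read off from the same presentation equals $1-t+\cdots+t^{q-1}\ne 1$, which already forces $G\ncong\Z$. The main obstacle is the bookkeeping in the reduction step above: because the welded crossings reconnect the strands without imposing any relation, one must track precisely which exit arc of each block is identified with which entrance arc of the next, and verify that the two block-relations compose to give the difference $2n-1-2m_{1}$ rather than some other function of $m_{1}$ and $m_{2}$. Once that reduction is pinned down, both cases follow immediately.
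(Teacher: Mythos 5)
Your endpoints agree with the paper's proof --- the collapse to $a=b$ when $m_{1}=n-1$, and a surjection onto the dihedral group of order $2(2n-1-2m_{1})$ when $1\le m_{1}<n-1$ (this is exactly the paper's group $D_{2(n-m_{1})-1}$, since $2(n-m_{1})-1=m_{2}-m_{1}$) --- but the bridge you use to reach them is false, and this is a genuine gap rather than deferred bookkeeping. The claimed Tietze reduction of the two block relations to the single torus relation $(ab)^{k}a=(ba)^{k}b$, $k=n-1-m_{1}$, i.e.\ the claim $G(K_{n,2,m_{1}})\cong G(K(2,2n-1-2m_{1}))$, already fails at $n=3$, $m_{1}=1$. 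Specializing the paper's relations R1, R2 to $m_{1}=1$, $m_{2}=4$ (or computing directly from the closure of the welded braid $\tau\sigma\tau\sigma^{4}$ via the action $\sigma\colon(x,y)\mapsto(xyx^{-1},x)$, $\tau\colon(x,y)\mapsto(y,x)$) gives
\[
G(K_{3,2,1})=\langle a,b \mid baba=aba^{2},\; ab\,baba=b\,baba\,b\rangle=\langle a,b\mid aba=bab,\; a^{2}b=ba^{2}\rangle ,
\]
the trefoil relation \emph{plus} the extra relation $[a^{2},b]=1$. That extra relation does not hold in the trefoil group: under the homomorphism $\langle a,b\mid aba=bab\rangle\to \mathrm{SL}_{2}(\Z)$ sending $a\mapsto\left(\begin{smallmatrix}1&1\\0&1\end{smallmatrix}\right)$, $b\mapsto\left(\begin{smallmatrix}1&0\\-1&1\end{smallmatrix}\right)$, one computes $a^{2}b\neq ba^{2}$. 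Hence $G(K_{3,2,1})$ is a \emph{proper} quotient of $G(K(2,3))$; and since the trefoil group is finitely generated and linear, hence residually finite and Hopfian, a proper quotient of it cannot even be abstractly isomorphic to it. Your heuristic that the two blocks ``subtract'' is valid only at the level of the dihedral quotient, not of the group itself.

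This error contaminates both halves of your argument as written. In the range $1\le m_{1}<n-1$ you verify the dihedral assignment $a\mapsto s$, $b\mapsto sr$ only against the single (incorrect) torus relation; since the actual group satisfies strictly more relations, you must check that the assignment kills the honest Wirtinger relations R1--R4, which is precisely the computational content of the paper's Case 2 and is not automatic (it does succeed, e.g.\ $[a^{2},b]\mapsto 1$ in $D_{3}$ above, but that is the step that needs proving). The Alexander-polynomial fallback fails in substance, not just in derivation: for $G(K_{3,2,1})$ Fox calculus gives a $2\times 2$ Alexander matrix with entries $\pm(1-t+t^{2})$ and $\pm(1-t^{2})$, so the first elementary ideal is $(1-t+t^{2},\,1-t^{2})=(3,\,t+1)$, whose gcd is $1$; the Alexander \emph{polynomial} of this welded knot is therefore trivial and obstructs nothing, even though the group is not $\Z$. (The elementary \emph{ideal}, being proper, would serve as an obstruction; the polynomial does not.) Finally, for $m_{1}=n-1$ your conclusion is true, but it cannot be reached by ``setting $k=0$'' in a presentation the group does not have; one must run the substitution argument on the actual pair of relations, which is exactly what the paper's Case 1 does at some length.
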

   
\begin{figure}[h]
    \centering 
    \includegraphics[scale=0.8]{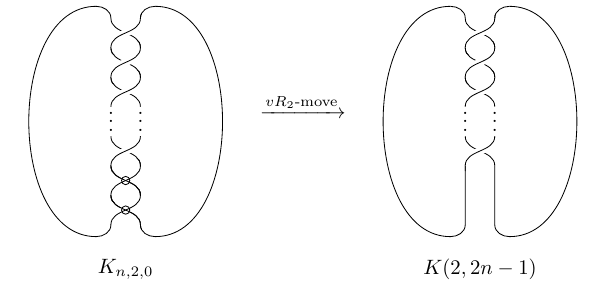}
    \caption{$K(2,2n+1)$ with two welds and zero gap.}
    \label{zerogap}
\end{figure}
\begin{proof}
To find a presentation of the knot group $G(K_{n,2,m_{1}})$, let the Wirtinger generators be $x_{1},x_{2},\cdots,x_{m_2-1}$ and $y_{1},y_{2},\cdots,y_{m_{1}-2},b,a$ (Figure \ref{fig:randomgap}).
Then the Wirtinger relations are
\begin{align*}
    x_{1} &= bab^{-1}\\
    x_{2} &= baba^{-1}b^{-1}.
\end{align*}
By induction, for each $i=1,2,\ldots,m_{2}$
\begin{align*}
    x_{i} &=
    \begin{cases}
        \underbrace{ba\cdots ba}_{i} b \underbrace{a^{-1}b^{-1}\cdots a^{-1}b^{-1}}_{i} & \text{when}\;i \;\text{is even} \\
        \underbrace{ba\cdots ba}_{i-1} bab^{-1} \underbrace{a^{-1}b^{-1}\cdots a^{-1}b^{-1}}_{i-1} &\text{when}\;i \;\text{is odd} \\
    \end{cases}
\end{align*}
Therefore,
$ x_{m_{2}}=
\begin{cases}
    \underbrace{ba\cdots ba}_{m_{2}} b \underbrace{a^{-1}b^{-1}\cdots a^{-1}b^{-1}}_{m_{2}}&  \text{when} \;m_{2} \;\text{is even} \\
    \underbrace{ba\cdots ba}_{m_{2}-1} bab^{-1} \underbrace{a^{-1}b^{-1}\cdots a^{-1}b^{-1}}_{m_{2}-1}  & \text{when} \;m_{2} \; \text{is odd}
\end{cases}$\\
$\bullet$ $m_{2}$ is even and $m_{1}$ is odd:
\begin{align*}
    y_{1}&= x_{m_{2}-1}x_{m_{2}}x_{m_{2}-1}^{-1}\\
    &=  \underbrace{ba\cdots ba}_{m_{2}}aba^{-1} \underbrace{a^{-1}b^{-1}\cdots a^{-1}b^{-1}}_{m_{2}}\\
    y_{2} &=  \underbrace{ba\cdots ba}_{m_{2}}abab^{-1}a^{-1}\underbrace{a^{-1}b^{-1}\cdots a^{-1}b^{-1}}_{m_{2}}\\
    \vdots & \qquad \quad  \vdots 
\end{align*} 
{\small \begin{align*}
y_{i}&=
    \begin{cases}
        \underbrace{ba\cdots ba}_{m_{2}}\underbrace{ab\cdots ab}_{i}\,a\,\underbrace{b^{-1}a^{-1}\cdots b^{-1}a^{-1}}_{i}\underbrace{a^{-1}b^{-1}\cdots a^{-1}b^{-1}}_{m_{2}} & \text{when}\;i \;\text{is even} \\
        \underbrace{ba\cdots ba}_{m_{2}}\,\underbrace{ab\cdots ab}_{i-1}\,aba^{-1}\,\underbrace{b^{-1}a^{-1}\cdots b^{-1}a^{-1}}_{i-1}\,\underbrace{a^{-1}b^{-1}\cdots a^{-1}b^{-1}}_{m_{2}} & \text{when}\;i \;\text{is odd}
    \end{cases}
\end{align*}}
Then
\begin{align*}
    b &= y_{m_{1}-2}\,y_{m_{1}-3}\,y_{m_{1}-2}^{-1} \\
    &=  \underbrace{ba\cdots ba}_{m_{2}}\; \underbrace{ab\cdots ab}_{m_{1}-1}\;a\;\underbrace{b^{-1}a^{-1}\cdots b^{-1}a^{-1}}_{m_{1}-1}\;\underbrace{a^{-1}b^{-1}\cdots a^{-1}b^{-1}}_{m_{2}}
    \end{align*} 
\begin{align*}
     \implies  \underbrace{ba\cdots ba}_{m_{2}}\, \underbrace{ab\cdots ab}_{m_{1}-1} &= \underbrace{ab\cdots ab}_{m_{2}-2}\,a\, \underbrace{ab\cdots ab}_{m_{1}-1}\,a 
\end{align*}
\begin{equation*}
    (ba)^{\frac{m_{2}}{2}}\,(ab)^{\frac{m_{1}-1}{2}}= (ab)^{\frac{m_{2}-2}{2}}\,a\, (ab)^{\frac{m_{1}-1}{2}}\,a \tag{R1}
\end{equation*}
and
{\small \begin{align*}
    a&= b\,y_{m_{1}-2}\,b^{-1}\\
    &= b\,  \underbrace{ba\cdots ba}_{m_{2}}\, \underbrace{ab\cdots aba}_{m_{1}-2}\,b\, \underbrace{a^{-1}b^{-1}\cdots a^{-1}b^{-1}a^{-1}}_{m_{1}-2}\,\underbrace{a^{-1}b^{-1}\cdots a^{-1}b^{-1}}_{m_{2}}\,b^{-1}
\end{align*}}
\begin{equation*}
     \implies ab\,  \underbrace{ba\cdots ba}_{m_{2}}\,a\, \underbrace{ba\cdots ba}_{m_{1}-3} = b\, \underbrace{ba\cdots ba}_{m_{2}}\, \underbrace{ab\cdots ab}_{m_{1}-1}
\end{equation*}
\begin{equation*}
    ab\, (ba)^{\frac{m_{2}}{2}}\,a\,(ba)^{\frac{m_{1}-3}{2}}= b\,(ba)^{\frac{m_{2}}{2}}\,(ab)^{\frac{m_{1}-1}{2}} \tag{R2}
\end{equation*}    
$\bullet$ $m_{2}$ is odd and $m_{1}$ is even:
\begin{align*}
    y_{1}&= x_{m_{2}-1}x_{m_{2}}x_{m_{2}-1}^{-1}\\
    &=  \underbrace{ba\cdots ba}_{m_{2}-1}\,b^{2}\,a\,b^{-2}\, \underbrace{a^{-1}b^{-1}\cdots a^{-1}b^{-1}}_{m_{2}-1}\\
    y_{2} &=  \underbrace{ba\cdots ba}_{m_{2}-1}\,b^{2}\,aba^{-1}\,b^{-2}\,\underbrace{a^{-1}b^{-1}\cdots a^{-1}b^{-1}}_{m_{2}-1}
\end{align*}
{\small \begin{align*}
y_{i}&=
    \begin{cases}
    \begin{multlined}
         \underbrace{ba\cdots ba}_{m_{2}-1}\,b^{2}\,\underbrace{ab\cdots ab}_{i-2}\,aba^{-1} \underbrace{b^{-1}a^{-1}\cdots b^{-1}a^{-1}}_{i-2} \\
       b^{-2}\,\underbrace{a^{-1}b^{-1}\cdots a^{-1}b^{-1}}_{m_{2}-1} 
    \end{multlined}
    & \text{ when}\;i \;\text{is even,}\\
    \begin{multlined}
         \underbrace{ba\cdots ba}_{m_{2}-1}\,b^{2}\,  \underbrace{ab\cdots ab}_{i-1}\,a  \underbrace{b^{-1}a^{-1}\cdots b^{-1}a^{-1}}_{i-1}\\
        b^{-2}\,\underbrace{a^{-1}b^{-1}\cdots a^{-1}b^{-1}}_{m_{2}-1}
    \end{multlined} 
    & \text{ when}\;i \;\text{is odd.}
    \end{cases}
\end{align*}}
\noindent
Then
\begin{align*}
    b &= y_{m_{1}-2}\,y_{m_{1}-3}\,y_{m_{1}-2}^{-1} \\
    &=  \underbrace{ba\cdots ba}_{m_{2}-1}\;b^{2}\, \underbrace{ab\cdots ab}_{m_{1}-2}\;a\;\underbrace{b^{-1}a^{-1}\cdots b^{-1}a^{-1}}_{m_{1}-2}\;b^{-2}\,\underbrace{a^{-1}b^{-1}\cdots a^{-1}b^{-1}}_{m_{2}-1} 
 \end{align*}  
 \begin{align*}
       \implies\; b\, \underbrace{ab\cdots ab}_{m_{2}-1}\,b\, \underbrace{ab\cdots ab}_{m_{1}-2}&= \underbrace{ab\cdots ab}_{m_{2}-1}\,b\, \underbrace{ab\cdots ab}_{m_{1}-2}\,a \\
     b\, \underbrace{ab\cdots ab}_{m_{2}-1}\,b\, \underbrace{ab\cdots ab}_{m_{1}-2}&= \underbrace{ab\cdots ab}_{m_{2}-1}\, \underbrace{ba\cdots ba}_{m_{1}} 
 \end{align*}

\begin{equation*}
b\,(ab)^{\frac{m_{2}-1}{2}}\,b\,(ab)^{\frac{m_{1}-2}{2}}=(ab)^{\frac{m_{2}-1}{2}}\,(ba)^{\frac{m_{1}}{2}} \tag{R3}
\end{equation*}
and
\begin{align*}
    a&= b\,y_{m_{1}-2}\,b^{-1}\\
    & \begin{multlined}
        =b\, \underbrace{ba\cdots ba}_{m_{2}-1}\,b^{2}\, \underbrace{ab\cdots ab}_{m_{1}-4}\,aba^{-1}\, \underbrace{b^{-1}a^{-1}\cdots b^{-1}a^{-1}}_{m_{1}-4}\;b^{-2}\\ 
        \underbrace{a^{-1}b^{-1}\cdots a^{-1}b^{-1}}_{m_{2}-1}\,b^{-1}
    \end{multlined}
\end{align*}
\begin{align*}
     \implies \;  ab\,  \underbrace{ba\cdots ba}_{m_{2}-1}\,b^{2}\, \underbrace{ab\cdots ab}_{m_{1}-4}\,a &= b\, \underbrace{ba\cdots ba}_{m_{2}-1}\,b^{2}\, \underbrace{ab\cdots ab}_{m_{1}-2} \\
     ab\, \underbrace{ba\cdots ba}_{m_{2}-1}\,b\, \underbrace{ba\cdots ba}_{m_{1}-2} &= b\, \underbrace{ba\cdots ba}_{m_{2}-1}\,b^{2}\, \underbrace{ab\cdots ab}_{m_{1}-2}
\end{align*}
\begin{equation*}
    ab\,(ba)^{\frac{m_{2}-1}{2}}\,b\,(ba)^{\frac{m_{1}-2}{2}}=b\,(ba)^{\frac{m_{2}-1}{2}}\,b^{2}\,(ab)^{\frac{m_{1}-2}{2}} \tag{R4}
\end{equation*}
Therefore, the group presentations $G(K_{n,2,m_{1}})$ are given by
\begin{equation}\label{rep1}
    G(K_{n,2,m_{1}}) = \LL a,b \ \bigg\vert
   R1,R2\RR \quad \text{if } \, m_{1}\, \text{is even}
\end{equation}
\begin{equation}\label{rep2}
    G(K_{n,2,2k}) = \LL a,b \ \bigg\vert
    R3,R4\RR \quad \text{if } \,m_{1} \,\text{is odd}.
\end{equation}
\begin{enumerate}
\item[]
Case 1: $m_{1}= (n-1)$ and $m_{2}=n$.
\newline
\vspace{1pt}
\newline
$\bullet$ When $n$ is even, 
\begin{align*}
    G(K_{n,2,n-1}) = \Big<a,b \,\Big\vert\, &(ba)^{\frac{n}{2}}\,(ab)^{\frac{n-2}{2}} = (ab)^{\frac{n-2}{2}}\,a\,(ab)^{\frac{n-2}{2}}\,a ; \\
    & ab\,(ba)^{\frac{n}{2}}\,a\, (ba)^{\frac{n-4}{2}}=b\,(ba)^{\frac{n}{2}}\,(ab)^{\frac{n-2}{2}}\Big> .
\end{align*}
Substituting the first relation in the second relation we have,
\begin{align*}
    ab\,(ba)^{\frac{n}{2}}\,a\, (ba)^{\frac{n-4}{2}} &= b\,(ba)^{\frac{n}{2}}\,(ab)^{\frac{n-2}{2}}\\
    &= b\,(ab)^{\frac{n-2}{2}}\, \,a\,(ab)^{\frac{n-2}{2}}\,a \\
    &= (ba)^{\frac{n}{2}} \,a\,(ba)^{\frac{n-2}{2}}
\end{align*}
\begin{equation}\label{first_eq}
    \implies ab\,(ba)^{\frac{n}{2}} = (ba)^{\frac{n}{2}} \,ab.
\end{equation}
Substituting \eqref{first_eq} in the first relation on the L.H.S repeatedly,
\begin{align*}
    (ba)^{\frac{n}{2}}\,(ab)^{\frac{n-2}{2}}&=(ab)^{\frac{n-2}{2}} \,a\,(ab)^{\frac{n-2}{2}}\,a \\
    (ba)^{\frac{n}{2}}\,ab\,(ab)^{\frac{n-4}{2}} &=(ab)^{\frac{n-2}{2}} \,a\,(ab)^{\frac{n-2}{2}}\,a \\
    \implies ab\,(ba)^{\frac{n}{2}}\,(ab)^{\frac{n-4}{2}} &=(ab)^{\frac{n-2}{2}} \,a\,(ab)^{\frac{n-2}{2}}\,a \\
    \text{Cancelling $ab$ from both sides,}&\\
    \quad (ba)^{\frac{n}{2}}\,(ab)^{\frac{n-4}{2}} &=(ab)^{\frac{n-4}{2}}\,a\,(ab)^{\frac{n-2}{2}}\,a \\
    \vdots & \qquad \qquad \vdots\\
    (ba)^{\frac{n}{2}}\,(ab) &=(ab) \,a\,(ab)^{\frac{n-2}{2}}\,a \\
    ab\,(ba)^{\frac{n}{2}} &= (ab)\,a\,(ab)^{\frac{n-2}{2}}\,a \\
    (ba)^{\frac{n}{2}} &= a^{2}\,(ba)^{\frac{n-2}{2}} \\
    ba &= a^{2} \\
    b&=a.
\end{align*}
This proves that 
\[  G(K_{n,2,n-1})  \cong \mathbb{Z} \quad \text{when $n$ is even}.\]

\noindent
$\bullet$ When $n$ is odd, 
\begin{align*}
    G(K_{n,2,n-1}) = \Big<a,b \,\Big\vert\,  b\,(ab)^{\frac{n-1}{2}}\,b\,(ab)^{\frac{n-3}{2}} &= (ab)^{\frac{n-1}{2}}\,(ba)^{\frac{n-1}{2}} ; \\
     ab\,(ba)^{\frac{n-1}{2}}\,b\, (ba)^{\frac{n-3}{2}} &=b\,(ba)^{\frac{n-1}{2}}\,b^{2}\,(ab)^{\frac{n-3}{2}} \Big> .
\end{align*}
 Now, from the second relation we get
    \begin{align*}
	ab\,(ba)^{\frac{n-1}{2}}\,b\, (ba)^{\frac{n-3}{2}} &= b\,(ba)^{\frac{n-1}{2}}\,b^{2}\,(ab)^{\frac{n-3}{2}} \\
 &= b\cdot b\,(ab)^{\frac{n-1}{2}}\,b\,(ab)^{\frac{n-3}{2}}\\
 &=b \cdot  (ab)^{\frac{n-1}{2}}\,(ba)^{\frac{n-1}{2}} \quad \text{using the first relation}
 \end{align*}
 \begin{equation}\label{second_eq}
     \implies ab\,(ba)^{\frac{n-1}{2}}\,b = (ba)^{\frac{n-1}{2}}\,b^{2}\,a
 \end{equation}
Substituting \eqref{second_eq} in the first relation repeatedly we have,
\begin{align*}
  b\,(ab)^{\frac{n-1}{2}}\,b\,(ab)^{\frac{n-3}{2}} &= (ab)^{\frac{n-1}{2}}\,(ba)^{\frac{n-1}{2}}\\
   (ba)^{\frac{n-1}{2}}\,b^{2}\,a\,b\,(ab)^{\frac{n-5}{2}} &= (ab)^{\frac{n-1}{2}}\,(ba)^{\frac{n-1}{2}}\\
    ab\,(ba)^{\frac{n-1}{2}}\,b\,b\,(ab)^{\frac{n-5}{2}}  &= (ab)^{\frac{n-1}{2}}\,(ba)^{\frac{n-1}{2}} \\
    \text{Cancelling $ab$ from both sides,}&\\
    (ba)^{\frac{n-1}{2}}\,b\,b\,(ab)^{\frac{n-5}{2}}  &= (ab)^{\frac{n-3}{2}}\,(ba)^{\frac{n-1}{2}} \\
    \vdots \qquad & \qquad \vdots \\
     (ba)^{\frac{n-1}{2}} \, b^{2}\,ab &= (ab)^{2}\,(ba)^{\frac{n-1}{2}}\\
     ab\, (ba)^{\frac{n-1}{2}} \, b^{2}&= (ab)^{2}\,(ba)^{\frac{n-1}{2}}
     \end{align*}
\begin{equation}\label{third_eq}
    \implies (ba)^{\frac{n-1}{2}} \, b^{2} = ab\,(ba)^{\frac{n-1}{2}}
\end{equation}
Substituting \eqref{third_eq} in the first relation on the L.H.S repeatedly,
\begin{align*}
    (ba)^{\frac{n}{2}}\,(ab)^{\frac{n-2}{2}}&=(ab)^{\frac{n-2}{2}} \,a\,(ab)^{\frac{n-2}{2}}\,a \\
    (ba)^{\frac{n}{2}}\,ab\,(ab)^{\frac{n-4}{2}} &=(ab)^{\frac{n-2}{2}} \,a\,(ab)^{\frac{n-2}{2}}\,a \\
    \implies ab\,(ba)^{\frac{n}{2}}\,(ab)^{\frac{n-4}{2}} &=(ab)^{\frac{n-2}{2}} \,a\,(ab)^{\frac{n-2}{2}}\,a \\
    \text{cancelling $ab$ from both sides,}&\\
    \quad (ba)^{\frac{n}{2}}\,(ab)^{\frac{n-4}{2}} &=(ab)^{\frac{n-4}{2}}\,a\,(ab)^{\frac{n-2}{2}}\,a \\
    \vdots \qquad & \qquad \vdots \\
    (ba)^{\frac{n}{2}}\,(ab) &=(ab) \,a\,(ab)^{\frac{n-2}{2}}\,a \\
    ab\,(ba)^{\frac{n}{2}} &= (ab)\,a\,(ab)^{\frac{n-2}{2}}\,a \\
    (ba)^{\frac{n}{2}} &= a^{2}\,(ba)^{\frac{n-2}{2}} \\
    ba &= a^{2} \\
    b&=a
\end{align*}
This proves that 
\[  G(K_{n,2,n-1})  \cong \mathbb{Z} \quad \text{when $n$ is even}.\]

\item[] Case 2: $m_{1} \neq (n-1)$.\\
Let, $D_{n}$ denote the Dihedral group of order $2n$.
Now, we will show that all groups $G(K_{n,2,m_{1}}),(1 \leq m_{1} < n-1)$  are not isomorphic to $\Z$ by showing that there exists an epimorphism from these groups to some dihedral group.\\
Let
\begin{align*}
    D_{2(n-m_{1})-1} &= \Big<r,s \, \vert \, r^{2(n-m_{1})-1}=1,s^{2}=1,rs=sr^{-1}\Big> \\
    &\cong \Big< x,y \, \vert \, (xy)^{2(n-m_{1})}x=(yx)^{2(n-m_{1})}y \, , \, x^{2}=y^{2}=1\Big>
\end{align*}
where, $x=s , y= r^{n-m_{1}}s.$\\
Now, we define an epimorphism between the presentations of $G(K_{n,2,m_{1}})$ and $D_{2(n-m_{1})-1}$ by
\[\phi : G(K_{n,2,m_{1}}) \rightarrow  	D_{2(n-m_{1})-1}\]
\[a \rightarrow x\]
\[b \rightarrow y\]
For the relations $R1,R2,R3,R4$ in \eqref{rep1} and \eqref{rep2} we check that
    \[\phi(R1)=\phi(R2)=\phi(R3)=\phi(R4)=(r^{2n-2m_{1}-1})^{n-m_{1}}=1 \]
So, $\phi$ is well defined.    
Let
\[D_{2(n-m_{1})-1} =\Big \{1,r,r^{2},\cdots,r^{2(n-m_{1})-2},s,rs,r^{2}s,\cdots,r^{2(n-m_{1})-2}s\Big \}.\]
\begin{align*}
    \phi^{-1}(r^{n-m_{1}-i})&=(ab)^{2i-1}           &&\text{for } i=1,2,\cdots,n-m_{1}\\
    \phi^{-1}(r^{2(n-m_{1})-(i+1)}) &=(ab)^{2i}     &&\text{for } i=1,2,\cdots,n-m_{1}-1\\
    \phi^{-1}(r^{n-m_{1}-i}s) &=(ab)^{2i-1}a        &&\text{for } i=1,2,\cdots,n-m_{1}\\
    \phi^{-1}( r^{2(n-m_{1})-(i+1)}s) &=(ab)^{2i}a  &&\text{for } i=1,2,\cdots,n-m_{1}-1
\end{align*}
This shows that $\phi$ is surjective.\\
Therefore,
\[ \faktor{G(K_{n,2,m_{1}})}{\ker \phi} \cong D_{2(n-m_{1})-1} \]
which implies
\[ G(K_{n,2,m_{1}}) \ncong \mathbb{Z}.\qedhere\]
\end{enumerate}
\end{proof}
\begin{corollary}~

\begin{enumerate}
    \itemsep=3pt
    \item
    \begin{enumerate}[(a)]
        \itemsep=3pt
        \item $\pi_{1}\big(\R^{4}-\Tube(K_{n,2,m_{1}})\big) \cong \Z$ \, if $m_{1}=n-1$.
        \item $\pi_{1}\big(\R^{4}-\Tube(K_{n,2,m_{1}})\big) 
         \ncong \Z$ \, if $1 \leq m_{1} < n-1$.
    \end{enumerate}
    \item The number of non-trivial ribbon torus knots obtained from the $K(2,2n+1)$ with two weldings does not exceed $n-1$.
\end{enumerate}
   
\end{corollary}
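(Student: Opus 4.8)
The plan is to deduce part~(1) directly from Theorem~\ref{thm:groupsTK} together with the group computation in the preceding proposition. First I would observe that, by construction, $\big(\Tube(K_{n,2,m_{1}}),K_{n,2,m_{1}}\big)$ is a virtual knot presentation of the ribbon torus-knot $\Tube(K_{n,2,m_{1}})$. Hence Theorem~\ref{thm:groupsTK} yields the isomorphism
\[
\pi_{1}\big(\R^{4}\setminus\Tube(K_{n,2,m_{1}})\big)\cong G(K_{n,2,m_{1}}).
\]
Statements (1)(a) and (1)(b) then follow verbatim from the proposition: for $m_{1}=n-1$ the right-hand side is $\Z$, whereas for $1\leq m_{1}<n-1$ it is not isomorphic to $\Z$. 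No work is required beyond invoking the two results in sequence.

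For part~(2) I would count the welded diagrams whose tubes we are able to certify as non-trivial. Recall that, up to the cyclic symmetry of the standard diagram, the gap parameter $m_{1}$ ranges over $\{0,1,\ldots,n-1\}$, giving $n$ distinct welded diagrams $K_{n,2,m_{1}}$ in total. For $m_{1}=0$ the diagram is $vR_{2}$-equivalent to the classical diagram of $K(2,2n-1)$, whose knot group is not $\Z$; for $1\leq m_{1}<n-1$ the proposition gives $G(K_{n,2,m_{1}})\ncong\Z$; and for $m_{1}=n-1$ one has $G(K_{n,2,m_{1}})\cong\Z$. Thus exactly $n-1$ of the $n$ diagrams, namely those with $m_{1}=0,1,\ldots,n-2$, have fundamental group not isomorphic to $\Z$. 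For each such diagram the isomorphism above forces $\pi_{1}\big(\R^{4}\setminus\Tube(K_{n,2,m_{1}})\big)\ncong\Z$, so the associated ribbon torus-knot is non-trivial, while the single diagram $m_{1}=n-1$ has abelian complement group $\Z$ and escapes detection by this method.

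The delicate point, and the reason the conclusion is an upper bound rather than an equality, is that the tube map is known to fail injectivity~\cite{Win}. Distinct diagrams $K_{n,2,m_{1}}$, or welded knots that happen to be welded-equivalent, may produce ambient isotopic ribbon torus-knots, and the group of the complement cannot separate them. Consequently the $n-1$ non-trivial diagrams yield \emph{at most} $n-1$ pairwise distinct non-trivial ribbon torus-knots, which is exactly the asserted bound. The main obstacle to sharpening ``does not exceed $n-1$'' to an equality would be to decide whether these $n-1$ tori are genuinely distinct; that would require an invariant finer than $\pi_{1}$ of the complement and is not pursued here.
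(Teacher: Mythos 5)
Your proposal is correct and follows essentially the same route as the paper: invoke Theorem~\ref{thm:groupsTK} to identify $\pi_{1}\big(\R^{4}\setminus\Tube(K_{n,2,m_{1}})\big)$ with $G(K_{n,2,m_{1}})$, then read off (1)(a), (1)(b) from the preceding proposition and count the $n-1$ values $m_{1}=0,1,\ldots,n-2$ (including the $m_{1}=0$ case handled via $vR_{2}$-equivalence with $K(2,2n-1)$) whose groups are not $\Z$. Your additional remark that the bound is only an upper bound because the tube map is not injective~\cite{Win} is a sound reading of the statement that the paper leaves implicit.
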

\begin{proof}
    From Proposition 4.2 we can see that the number of welded torus knots $K_{n,2,m_{1}}$ with two weldings have knot group not isomorphic to $\Z$ is $n-1$ for the values of $m_{1}$ from $0$ to $n-2$. Therefore, by Proposition 2.2, the corresponding ribbon torus knots under the tube map will be non-trivial. 
\end{proof}

If we weld more than two crossings in $K(2,2n+1)$, the gaps between any two crossings will matter and it will lead to increase in the computational complexity. 
One can propose to write a computer program to address the most general case.

\subsection{Welded Knots obtained from $T_n$}
\subsubsection{$T_n$ with one welding}
Let $T_{n,1}$ denote the knot diagram obtained after welding one crossing in the twist region of the standard twist knot diagram (Figure~\ref{fig:twist}) having $n$ real crossings in the twist region.
\begin{proposition}
   \begin{align*}
    G(T_{n,1}) & = \Big\langle a,b \mid (ab^{-1})^{\frac{n-2}{2}} a (ba^{-1})^{\frac{n-2}{2}}  = b (ab^{-1})^{\frac{n-4}{2}} a (ba^{-1})^{\frac{n-2}{2}} b^{-1}\,,\\
    & \hspace{4.6cm} b = (ab^{-1})^{\frac{n-4}{2}} a (ba^{-1})^{\frac{n-2}{2}} \Big\rangle \\
    & \cong \Z.
\end{align*} 
\end{proposition}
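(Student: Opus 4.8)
The plan is to compute a Wirtinger presentation of $G(T_{n,1})$ directly from the diagram and then simplify it. First I would fix an orientation and label the arcs of the welded diagram $T_{n,1}$ in Figure~\ref{fig:twist}, using two distinguished generators $a$ and $b$ associated to the clasp region together with a sequence of intermediate generators $x_1, x_2, \ldots$ running through the twist region. At the single welded crossing there is no Wirtinger relation, so the relations of $G(T_{n,1})$ come only from the classical crossings: the classical crossings inside the twist region, which propagate the labels, and the crossings of the clasp, which close up the presentation.

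The heart of the computation is to establish, by induction on $k$, a closed form for the twist-region generators. Because consecutive crossings in a twist region amount to conjugation by a fixed element, I expect each $x_k$ to be a conjugate of $a$ by a power of $ab^{-1}$, producing words of the shape $(ab^{-1})^{j} a (ba^{-1})^{j}$. Pinning down the exponents and the exact conjugating words—while keeping careful track of the orientation, the over/under pattern, and the effect of the deleted relation at the welded crossing—is the main obstacle. This is the direct analogue of the inductive formula for the $x_i$ appearing in the two-welding computation for $K(2,2n+1)$, and the same bookkeeping should carry over with only the clasp contributing the boundary relations here.

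Once the twist-region labels are available, the two classical crossings bordering the twist region yield exactly the two stated relations: one clasp crossing equates the two expressions obtained for $b$ from its two incident arcs, giving the first relation, while the second relation $b = (ab^{-1})^{\frac{n-4}{2}} a (ba^{-1})^{\frac{n-2}{2}}$ simply records the value of $b$ in terms of the twist-region word. This produces the presentation displayed in the statement.

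Finally, the reduction to $\Z$ is short and I expect no difficulty there. Using the second relation and the identity $(ab^{-1})^{\frac{n-2}{2}} = (ab^{-1})\,(ab^{-1})^{\frac{n-4}{2}}$, the left-hand side of the first relation collapses to $(ab^{-1})\cdot b = a$, while its right-hand side collapses to $b \cdot b \cdot b^{-1} = b$. Hence the first relation becomes $a = b$, and one checks that the second relation is automatically satisfied when $a = b$. Therefore $G(T_{n,1}) \cong \LL a \mid \RR \cong \Z$. In summary, the genuine work is the inductive derivation of the twist-region generators; the algebraic collapse at the end is immediate once the presentation is correct.
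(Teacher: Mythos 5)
The paper states this proposition with no proof at all (it is the only result in Section~4 for which the authors give no argument), so there is no proof to compare your attempt against directly; the natural benchmark is the paper's proofs of the sibling results for $K_{n,1}$, $K_{n,2,m_{1}}$ and $T_{n,2}$, and your plan follows exactly that template: Wirtinger presentation, inductive closed form for the twist-region generators, then algebraic reduction. Your reduction of the stated presentation to $\Z$ is correct and complete: writing $(ab^{-1})^{\frac{n-2}{2}} = (ab^{-1})\,(ab^{-1})^{\frac{n-4}{2}}$ and substituting the second relation $b = (ab^{-1})^{\frac{n-4}{2}} a (ba^{-1})^{\frac{n-2}{2}}$ turns the left side of the first relation into $(ab^{-1})\,b = a$ and the right side into $b\,b\,b^{-1} = b$, so the two relations force $a=b$, under which both relations become trivial identities, giving $\langle a \mid \ \rangle \cong \Z$. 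This is the same style of collapse the paper performs for $G(K_{n,1})$.

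However, as a proof of the full statement your text has a genuine gap: the equality $G(T_{n,1}) = \langle a,b \mid \cdots \rangle$ is itself part of the claim, and you only describe, without carrying out, the induction that produces the twist-region words and the clasp relations. You correctly anticipate words of the shape $(ab^{-1})^{j} a (ba^{-1})^{j}$, but you never fix the orientation and the over/under pattern, never verify that the exponents come out to $\tfrac{n-2}{2}$ and $\tfrac{n-4}{2}$, and never identify which clasp crossing produces which of the two relations; labelling conventions of exactly this sort are where such computations go wrong. Moreover, the stated exponents are integers only when $n$ is even, a parity restriction that neither you nor the paper addresses --- note that the paper's own computation for $T_{n,2}$ splits into even and odd cases of $n$ for precisely this reason, and a complete proof of the present proposition would have to do the same (or restrict the statement to even $n$). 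To close the gap you would need to carry out the induction explicitly, in the style of the paper's proof for $T_{n,2}$: derive $x_1 = b^{-1}ab$-type starting relations, prove the closed form for $x_i$ by induction on $i$ with the parity of $i$ tracked, and then read off the two clasp relations; only after that does your (correct) collapse argument apply.
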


\subsubsection{$T_n$ with two weldings}
Let $T_{n,2}$ denote the knot diagram obtained after welding two crossings with one classical crossing in between in the twist region of the standard twist knot diagram (Figure~\ref{fig:twist1}).

\begin{figure}[H]
    \centering
    \begin{subfigure}{0.49\textwidth}
        \centering
        \includegraphics{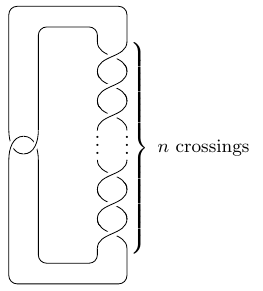}
        \caption{Standard twist knot diagram.}
        \label{fig:twist}
    \end{subfigure}
    \begin{subfigure}{0.49\textwidth}
        \centering
        \includegraphics{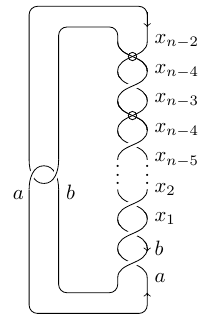}
        \caption{Twist knot with two welds and one gap.}
        \label{fig:twist1}
    \end{subfigure}
    \caption{Twist knot and welded twist knot.}
    \label{}
\end{figure}
\begin{proposition}
Let $G(T_{n,2})$ be the knot group of $T_{n,2}$.
    \[G(T_{n,2})  \ncong \mathbb{Z} \quad \text{when}\, n \, \text{is odd} \]
   \end{proposition}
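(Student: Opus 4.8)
The plan is to follow the same template used in the analysis of the torus knots $K_{n,2,m_1}$: first extract a two-generator Wirtinger presentation of $G(T_{n,2})$, and then exhibit an epimorphism onto a nonabelian group --- a dihedral group --- which immediately forces $G(T_{n,2}) \ncong \mathbb{Z}$, since every quotient of an abelian group is abelian while a dihedral group $D_m$ with $m \geq 3$ is not.

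First I would set up the Wirtinger presentation from the diagram in Figure~\ref{fig:twist1}. Labelling the arcs of the twist region lying between the two welded crossings, together with the arcs of the clasp, I would write one relation per classical crossing, recalling that the two welded crossings contribute no relations. As in the torus computation, the arcs threading the alternating over/under crossings of the twist region can be expressed inductively as conjugates of two fixed generators $a$ and $b$, producing words of the form $(ab)^{\pm k} a (ba)^{\mp k}$ and their analogues. I would then substitute these arc labels into the relations coming from the clasp crossings, eliminate all the intermediate generators, and collapse the presentation down to $\langle a, b \mid R, R' \rangle$ for two explicit relators $R, R'$ written as words in $a$ and $b$, exactly parallel to the relations (R1)--(R4) derived for the torus family.

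With the two-relator presentation in hand, I would define a homomorphism $\phi \colon G(T_{n,2}) \to D_m$ onto a dihedral group of order $2m$ by sending $a \mapsto x$ and $b \mapsto y$, where $x,y$ are two reflections (involutions) that generate $D_m$, and $m$ is chosen as an explicit function of $n$. The modulus $m$ must be taken so that the images of $R$ and $R'$ collapse to the identity; here the hypothesis that $n$ is \emph{odd} enters decisively, since the parity of $n$ governs the exponent sums appearing in $R$ and $R'$ and hence whether $\phi(R)$ and $\phi(R')$ vanish in $D_m$. Verifying $\phi(R)=\phi(R')=1$ is a routine but bookkeeping-heavy computation in the dihedral group, using $x^2=y^2=1$ together with $(xy)^m=1$. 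Surjectivity then follows by writing down explicit preimages of the generators $r=xy$ and $s=x$, just as was done in the torus case.

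The main obstacle will be the reduction of the Wirtinger presentation to two generators and two relators. The twist knot carries both a twist region and a clasp, and the two welded crossings separated by a single classical crossing break the clean alternating pattern that was available for $K(2,2n+1)$, so the inductive formulas for the arc labels, and the final relators $R,R'$, will be appreciably messier. Once those relators are pinned down, the remaining delicate point is to identify the correct modulus $m(n)$ and to check well-definedness of $\phi$ --- in particular, to isolate exactly why odd $n$ produces a genuinely nonabelian dihedral image, so that $G(T_{n,2})$ cannot be isomorphic to $\mathbb{Z}$.
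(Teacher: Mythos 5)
Your proposal follows essentially the same route as the paper's proof: the paper reduces the Wirtinger presentation of $G(T_{n,2})$ to a two-generator, two-relator presentation (the relations $R_7$, $R_8$ in the odd case) and then exhibits an epimorphism $\psi\colon G(T_{n,2}) \to D_{2n-7}$ sending $a$ and $b$ to the two reflections $x = s$ and $y = r^{n-3}s$, concluding non-isomorphism with $\mathbb{Z}$ exactly as you describe. Your outline correctly identifies all the key steps, including the role of the parity of $n$ in making the dihedral image work out, so it matches the paper's argument in both structure and substance.
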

\begin{proof}
    Let $x_{1},x_{2},\cdots,x_{n-2}$ be the generators for the knot group.
    The Wirtinger relations are the following.
    From the twist region:
\begin{align*}
 	x_{1} &= b^{-1} \,a\,b\\
 	x_{2} &= x_{1} \, b\, x_{1}^{-1}    \\
 	x_{3} &= x_{2} ^{-1}\, x_{1}\, x_{2} \\
 	x_{4} &= x_{3}\,x_{2}\,x_{3}^{-1} . \\
  \end{align*}
Then, for $i \leq (n-3)$
\begin{align*}
    x_{i} =  
    \begin{cases}
       x_{i-1}\,x_{i-2}\,x_{i-1}^{-1}, & \text{if $i$ is even}, \\
       x_{i-1}^{-1}\,x_{i-2}\,x_{i-1}, & \text{if $i$ is odd},
       \end{cases}
\end{align*}
And
\begin{align*}
    x_{n-2} =  
    \begin{cases}
       x_{n-4}^{-1}\,x_{n-3}\,x_{n-4}, & \text{if $n$ is even}, \\
       x_{n-4}\,x_{n-3}\,x_{n-4}^{-1}, & \text{if $n$ is odd}. \\
       \end{cases}
\end{align*}
From the clasp region:\\
 	\[ 
   n \; \text{even}
    \begin{cases}
      x_{n-4}&=a\,x_{n-2}\,a^{-1}\\
           a&= x_{n-4}\,b\,x_{n-4}^{-1}
  \end{cases}
 	\qquad
   n \;\text{odd}
  \begin{cases}
      x_{n-2} &=a^{-1}\,x_{n-4}\,a\\
        a &= x_{n-4}^{-1}\,b\,x_{n-4}
  \end{cases}\]

We can write
\begin{align*}
    	x_{1} &= b^{-1}\,a\,b \\
            x_{2} &= b^{-1}\,a\,b\,a^{-1}\,b \\
            x_{3} &= b^{-1}\,a\,b^{-1}\,a\,b\,a^{-1}\,b \\
            x_{4} &=(b^{-1}\,a)^{2}\,b\,(a^{-1}\,b)^{2} 
\end{align*}
Then, for $i \leq (n-3)$
\begin{align*}
    x_{i} =  
    \begin{cases}
       \underbrace{b^{-1}\,a\cdots b^{-1}\,a}_{i}\,b\,\underbrace{a^{-1}\,b\cdots a^{-1}\,b}_{i} \quad \text{if} \quad i \;\text{is even} \\
       \underbrace{b^{-1}\,a\cdots b^{-1}\,a }_{i-1}\,b^{-1}\,a\,b\,\underbrace{a^{-1}\,b\cdots a^{-1}\,b}_{i-1} \quad \text{if} \quad i \;\text{is odd} \\
       \end{cases}
\end{align*}
\begin{align*}
    x_{n-2} =  
    \begin{cases}
       \underbrace{b^{-1}\,a\cdots b^{-1}\,a }_{n-4}\,b^{-2}\,a\,b^{2}\,\underbrace{a^{-1}\,b\cdots a^{-1}\,b}_{n-4} \quad \text{if} \quad n \; \text{is even} \\
       \underbrace{b^{-1}\,a\cdots b^{-1}\,a }_{n-3}\,a\,b\,a^{-1}\,\underbrace{a^{-1}\,b\cdots a^{-1}\,b}_{n-3} \quad \text{if} \quad n \; \text{is odd}\\
       \end{cases}
\end{align*}
Now, clasp region gives the relations for the presentation:\\
\text{When $n\,(n>3)$ is even},
\begin{align*}
     G(K_{n,2}) &= \Big<a,b \,\Big\vert\,R_{5}, R_{6} \Big> \\
     \text{where}\\
     R_{5} &: (b^{-1}\,a)^{\frac{n-4}{2}}\,b\,(a^{-1}\,b)^{\frac{n-4}{2}}\,a = a\,(b^{-1}\,a)^{\frac{n-4}{2}}\,b^{-2}\,a\,b^{2}\,(a\,b^{-1})^{\frac{n-4}{2}}\,a  \\
     R_{6} &:(a\,b^{-1})^{\frac{n-4}{2}}\,a\,(b\,a^{-1})^{\frac{n-4}{2}} =(b^{-1}\,a)^{\frac{n-4}{2}}\,b\,(a^{-1}\,b)^{\frac{n-4}{2}} 
\end{align*}
When $n\,(n>3)$ is odd,
\begin{align*}
     G(K_{n,2}) &= \Big<a,b \,\Big\vert\,R_{7}, R_{8} \Big> \\
     \text{where}\\
     R_{7} &: a\,(b^{-1}\,a)^{\frac{n-3}{2}}\,a\,b\,a^{-1}\,(a^{-1}\,b)^{\frac{n-3}{2}} = (b\,a^{-1})^{\frac{n-5}{2}}\,b^{-1}\,a\,b\,(a\,b^{-1})^{\frac{n-5}{2}}\,a  \\
     R_{8} &:(b^{-1}\,a)^{\frac{n-5}{2}}\,b^{-1}\,a\,b\,(a^{-1}\,b)^{\frac{n-5}{2}} \,a= b\,(b^{-1}\,a)^{\frac{n-5}{2}}\,b^{-1}\,a\,b\,(a^{-1}\,b)^{\frac{n-5}{2}}
\end{align*}
Let 
\begin{align*}
    D_{2n-7} &= \LL r,s \, \vert \, r^{2n-7}=1,s^{2}=1, rs=sr^{-1} \RR\\
    &\cong \LL x,y \,\vert\, (xy)^{2(n-3)}x=(yx)^{2(n-3)}y \, , \, x^{2}=y^{2}=1 \RR.
\end{align*}
where $x=s, y= r^{n-3}s$.\\
Now, we define an epimorphism between the presentations of $G(T_{n,2})$ $(n>3,odd)$ and $D_{2n-7}$ by
\[\psi : G(K_{n,2}) \rightarrow  	D_{2n-7}\]
\[a \rightarrow x\]
\[b \rightarrow y\]
Therefore,
\[ \faktor{G(T_{n,2})}{\ker \psi} \cong D_{2n-7} \]
which implies
\[ G(T_{n,2}) \ncong \mathbb{Z}.\]
\end{proof}
\begin{corollary}
    If $n$ is odd, then $\pi_1(\R^4-\Tube(T_{n,2})) \ncong \Z$.
\end{corollary}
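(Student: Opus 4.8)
The plan is to reuse the strategy that worked for the torus knots $K_{n,2,m_1}$: since $\Z$ is abelian and admits no non-abelian quotient, it suffices to exhibit a surjection from $G(T_{n,2})$ onto a non-abelian group. The natural target is a dihedral group coming from the dihedral (Fox coloring) representation in which every meridian is sent to a reflection; producing such a surjection immediately forces $G(T_{n,2}) \ncong \Z$.

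First I would extract a finite presentation of $G(T_{n,2})$. Labelling the arcs of the diagram in Figure~\ref{fig:twist1}, the classical crossings of the twist region give a recursion of the shape $x_i = x_{i-1}^{\pm 1}\, x_{i-2}\, x_{i-1}^{\mp 1}$, which I would solve by induction to write each $x_i$ as an explicit alternating word in the two meridians $a,b$, of the form $\underbrace{b^{-1}a\cdots b^{-1}a}\, b\, \underbrace{a^{-1}b\cdots a^{-1}b}$ with the appropriate block lengths. Since only the odd case is claimed, I would then read off the two clasp relations $x_{n-2}=a^{-1}x_{n-4}a$ and $a=x_{n-4}^{-1}\,b\,x_{n-4}$, substitute the closed forms for $x_{n-2}$ and $x_{n-4}$, and reduce everything to a two-generator, two-relator presentation $\langle a,b \mid R_7, R_8\rangle$.

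Next I would introduce the target group $D_{2n-7}=\langle r,s \mid r^{2n-7}=1,\ s^2=1,\ rs=sr^{-1}\rangle$ and pass to the equivalent presentation $\langle x,y \mid x^2=y^2=1,\ (xy)^{2(n-3)}x=(yx)^{2(n-3)}y\rangle$ via $x=s$, $y=r^{n-3}s$, so that the assignment $a\mapsto x$, $b\mapsto y$ sends both meridians to reflections. To see the map is well defined I must check $\psi(R_7)=\psi(R_8)=1$. Here the algebra becomes transparent: because $x^2=y^2=1$, every word in $a,b$ collapses to an alternating product, each side of $R_7$ and $R_8$ reduces to a word of the form $(xy)^k x$ or $(xy)^k y$, and the dihedral relator (equivalently $r^{2n-7}=1$) is precisely what forces the two sides of each relation to coincide. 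Surjectivity is then immediate since $x$ and $y$ already generate $D_{2n-7}$.

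The main obstacle I anticipate is the bookkeeping in the inductive step together with the clasp substitution: I must track carefully how the alternating blocks accumulate through the recursion and how they telescope once $x_{n-2}$ and $x_{n-4}$ are inserted into the clasp relations, so that the exponent of $(xy)$ appearing on each side of $R_7$ and $R_8$ is exactly the one annihilated by $r^{2n-7}=1$. Once the relations are verified to vanish, $G(T_{n,2})/\ker\psi \cong D_{2n-7}$ is non-abelian for odd $n>3$, whence $G(T_{n,2}) \ncong \Z$; the statement for $\pi_1(\R^4-\Tube(T_{n,2}))$ then follows from Theorem~\ref{thm:groupsTK}.
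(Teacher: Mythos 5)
Your proposal takes essentially the same route as the paper: the paper establishes $G(T_{n,2}) \ncong \Z$ for odd $n$ (Proposition 4.5) by deriving the two-relator presentation $\langle a,b \mid R_7, R_8\rangle$ and exhibiting exactly the epimorphism $\psi\colon G(T_{n,2}) \to D_{2n-7}$ with $a \mapsto s$, $b \mapsto r^{n-3}s$, and then the corollary is deduced in one line from Theorem~\ref{thm:groupsTK}. Your only difference is that you inline the proposition's dihedral-quotient argument into the corollary's proof rather than citing it separately, which is immaterial; like the paper, you defer the explicit verification that $\psi(R_7)=\psi(R_8)=1$, so your plan is neither more nor less complete than the published argument.
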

\begin{proof}
    The proof follows directly from Theorem~\ref{thm:groupsTK}.
\end{proof}
\begin{conjecture}
    If $n$ is even, then $G(T_{n,2})\ncong \mathbb{Z}$.
\end{conjecture}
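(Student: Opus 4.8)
The plan is to settle the conjecture exactly as in the odd case, by exhibiting a non-abelian dihedral quotient of $G(T_{n,2})$. Since all Wirtinger generators are meridians, they are mutually conjugate, so in any quotient onto a dihedral group of odd order they must all map to reflections; this both forces the target to be dihedral and pins down the shape of the map. Concretely, I would take
\[
D_{2n-7}=\LL r,s \mid r^{2n-7}=1,\ s^{2}=1,\ rs=sr^{-1}\RR
\]
and define $\phi\colon G(T_{n,2})\to D_{2n-7}$ by $a\mapsto s$ and $b\mapsto sr$, the same assignment (up to relabelling the rotation) that produced the surjection $\psi$ for odd $n$. One first records the elementary images $ab^{-1},a^{-1}b\mapsto r$, $b^{-1}a,ba^{-1}\mapsto r^{-1}$, and $a^{2},b^{2}\mapsto 1$.

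First I would re-derive the two defining relations from the clasp equations $x_{n-4}=a\,x_{n-2}\,a^{-1}$ and $a=x_{n-4}\,b\,x_{n-4}^{-1}$, using $x_{n-4}=(b^{-1}a)^{\frac{n-4}{2}}b(a^{-1}b)^{\frac{n-4}{2}}$ and $x_{n-2}=(b^{-1}a)^{\frac{n-4}{2}}b^{-2}a\,b^{2}(a^{-1}b)^{\frac{n-4}{2}}$. The essential sanity check here is the augmentation $G(T_{n,2})\to\Z/2$ sending every meridian to $1$: both sides of a correctly transcribed relation must have the same letter-length modulo $2$. This check is passed by $R_{6}$, but when applied to $R_{5}$ as printed it detects a mismatch, so before substituting I would correct $R_{5}$ to the parity-balanced form
\[
(b^{-1}a)^{\frac{n-4}{2}}b(a^{-1}b)^{\frac{n-4}{2}} = a\,(b^{-1}a)^{\frac{n-4}{2}}b^{-2}a\,b^{2}(a^{-1}b)^{\frac{n-4}{2}}a^{-1}.
\]
I expect this to be the main obstacle, and indeed the likely reason the statement is recorded only as a conjecture: the even-$n$ clasp produces the balanced factor $b^{-2}(\cdots)b^{2}$ (rather than the single $aba^{-1}$ occurring for odd $n$), and it is exactly this factor that decides whether the two sides of $R_{5}$ land in the same coset of the rotation subgroup. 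With the printed $R_{5}$ the two sides fall into different cosets (rotation versus reflection), which makes the dihedral quotient look impossible; the corrected relation removes this illusory obstruction.

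With the corrected relations in hand the verification is short. Because $b^{2}\mapsto 1$, the balanced factor collapses under $\phi$, and both $\phi(R_{5})$ and $\phi(R_{6})$ reduce to the single identity $r^{2n-7}=1$, which holds in $D_{2n-7}$; hence $\phi$ is well defined. Its image contains $\phi(a)=s$ and $\phi(a)^{-1}\phi(b)=r$, so $\phi$ is onto, giving
\[
\faktor{G(T_{n,2})}{\ker\phi}\cong D_{2n-7}.
\]
Since $D_{2n-7}$ is non-abelian as soon as $2n-7\ge 3$, this yields $G(T_{n,2})\ncong\Z$ for every even $n\ge 6$.

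Finally I would address the genuine boundary case. At $n=4$ the exponent $\frac{n-4}{2}$ vanishes, the relations degenerate to $a=b$ together with a tautology, so $G(T_{4,2})\cong\Z$, and correspondingly the target $D_{1}\cong\Z/2$ is abelian and detects nothing. Thus the dihedral argument proves the conjecture precisely for even $n\ge 6$, and $n=4$ must be excluded as a genuine exception (so the clean statement is really for $n\ge 6$). I would confirm the corrected relations, the image computation, and surjectivity on the small cases $n=6,8$ — by hand or in \textsc{GAP} — before committing to the closed-form verification for general even $n$.
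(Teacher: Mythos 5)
This statement is recorded in the paper only as a conjecture; the paper gives no proof of it, so the only meaningful benchmark is the paper's proof of the odd case (the proposition just before it), and your argument is precisely the even-$n$ analogue of that dihedral-quotient argument. Your computations check out: with $k=\tfrac{n-4}{2}$, the corrected relation $R_5$ (i.e.\ the clasp relation $x_{n-4}=a\,x_{n-2}\,a^{-1}$ written out, whose right-hand side must end in $(a^{-1}b)^{k}a^{-1}$, not $(ab^{-1})^{k}a$) and the relation $R_6$ both map, under $a\mapsto s$, $b\mapsto sr$, to $r^{4k+1}=r^{2n-7}=1$, which holds in $D_{2n-7}$; the image contains $s$ and $r$, so $\phi$ is a surjection onto a group that is non-abelian exactly when $2n-7\ge 3$. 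Your diagnosis of the typo is also correct and well argued: the printed $R_5$ is unbalanced under the exponent-sum homomorphism (its two sides would map to a reflection and a rotation respectively), so no assignment sending meridians to reflections could ever satisfy it, whereas the parity-balanced form collapses as you describe because $b^{2}\mapsto 1$.

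One substantive point deserves emphasis: your exclusion of $n=4$ is not a cosmetic restriction but a genuine refutation of the conjecture as literally stated. At $n=4$ one has $k=0$, the relation $R_6$ degenerates to $a=b$, $R_5$ becomes vacuous, and $G(T_{4,2})\cong\Z$. This is corroborated by the paper's own table for $6_1$ (which is $T_4$): the two gap-one pairs of welded crossings in the twist region, $\{c_3,c_5\}$ and $\{c_4,c_6\}$, are both listed there with group $\cong\Z$. So what you have actually done is disprove the statement at $n=4$ and prove it for all even $n\ge 6$; if this were to be incorporated into the paper, the conjecture should be restated with the hypothesis $n\ge 6$ (or $n>4$), and your argument then resolves it completely.
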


\section{Knots from Rolfsen's table upto 6 Crossings}	
\noindent
In the previous sections we discover some families by welding at most two crossings because the number of cases will increase rapidly and that will become computationally difficult. In this section we consider standard diagrams of classical knots with crossing number at most six so that we can observe every case possible. Now, any non-trivial welded knot $K$ satisfies $c(K) \geq 3$, where $c(K)$ is the number of classical crossings \cite[Lemma 2.3]{Satoh1}.
Therefore, at most one and two crossings can be welded in the knots with four crossings ($4_{1}$) and with five crossings $(5_{1},5_{2})$ respectively.
These cases are already discussed in the previous sections.
The following tables include the fundamental groups of welded knots obtained by welding at most three crossings of the standard diagrams of the six crossing knots.
It is clear that welding more than three crossings will lead to the welded unknot~\cite[Lemma 2.3]{Satoh1}. In Figure~\ref{fig:6crossings}, the crossings are denoted by 
$c_{i},i=1,\cdots,6$.
The first column of each table indicates which crossings have been welded.
\begin{figure}[ht]
    \centering
    \includegraphics[scale=0.75]{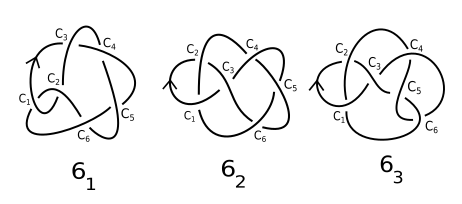}
    \caption{Knots with six crossings.}
    \label{fig:6crossings}
\end{figure}
\newpage
$\bullet$ {\bf $6$ crossing knots with one weld:}\\

\begin{TAB}(r,0.3cm,0.5cm)[5pt]{|c|c|}{|c|c|}
\centering
\shortstack{$\mathbf{6_{1},6_{2},6_{3}}$\\  {\bf one weld}} &\shortstack{{\bf Welded knot group} \\ $\mathbf{G(K)}$}\\
    
    $\{c_{i},\; \forall i=1,\cdots,6\}$ & $\cong \Z$
\end{TAB}
\newline
\vspace*{2mm}
\newline
$\bullet$ $\mathbf{6_{1}}$ {\bf with two weldings:}\\

\begin{TAB}(r,0.3cm,0.5cm)[5pt]{|c|c|}{|c|c|c|c|c|}
\centering
\shortstack{{\bf pair of welded crossings} \\ $\mathbf{\{c_{i},c_{j}\}} \; i<j$}   &\shortstack{{\bf Welded knot group} \\ $\mathbf{G(K)}$}\\
     
 \shortstack{$\{c_{1},c_{j}\}, \forall j=3,4,5,6$, $\{c_{2},c_{j}\}, \forall j=3,4,5,6$ \\$\{c_{3},c_{5}\}$,$\{c_{4},c_{6}\}, \{c_{5},c_{6}\} $  }& $\cong \Z$    \\ 
     
$\{c_{1},c_{2}\}$ & \shortstack{ $ \cong \Z$ \\ welded unknot}  \\
     
$\{c_{i},c_{i+1}\}, \forall j=3,4,5$ & $\cong \Big<a,b \vert a^{-1}bab^{-1}aba^{-1}b^{-1}ab^{-1}=e \Big> \ncong \Z$   \\
      
$\{c_{3},c_{6}\}$ &  $\cong \Big<a,b \vert b^{-1}ab=ab^{-1}a \Big> \ncong \Z$ 
\end{TAB}
\newline
\vspace*{2mm}
\newline
$\bullet$ $\mathbf{6_{1}}$ {\bf with three weldings:}\\

\begin{TAB}(r,0.3cm,0.5cm)[5pt]{|c|c|}{|c|c|}
\centering
 $\mathbf{\{c_{i},c_{j},c_{k}\},\; i<j<k}$   & \shortstack{{\bf Welded knot group} \\ $\mathbf{G(K)}$}\\
 $\{c_{i},c_{j},c_{k}\} \forall \,i,j,k=1,\cdots,6$ & $\cong \Z$ 
\end{TAB}
\newline
\vspace*{2mm}
\newline
\newpage
$\bullet$ $\mathbf{6_{2}}$ {\bf with two weldings:}\\

\begin{TAB}(r,0.3cm,0.5cm)[5pt]{|c|c|}{|c|c|c|c|c|c|c|}
\centering
 \shortstack{{\bf pair of welded crossings} \\ $\mathbf{\{c_{i},c_{j}\},\; i<j}$}   & \shortstack{{\bf Welded knot group} \\ $\mathbf{G(K)}$}\\
 
\shortstack{$\{c_{1},c_{j}\}, \forall j=3,4,5$,\\ $\{c_{2},c_{j}\}, \forall j=3,5,6$ } & $\cong \Z$  \\

\shortstack{$\{c_{3},c_{j}\}, \forall j=4,5,6$ \\
$\{c_{4},c_{6}\}$   } & $\cong \Z$  \\

$\{c_{4},c_{2}\}$ &  $\cong \Big<a,b \vert a^{2}=b^{2},b^{-1}ab=ab^{-1}a \Big> \ncong \Z$  \\

$\{c_{4},c_{5}\},\{c_{5},c_{6}\}$  & $\cong \Big<a,b \vert a^{-1}ba^{-1}b^{-1}aba^{-1}bab^{-1} \Big> \ncong \Z$    \\

$\{c_{2},c_{4}\}$  & $\cong \Big<a,b \vert b^{2}=a^{2},a^{-1}ba=ba^{-1}b \Big> \ncong \Z$    \\

$\{c_{5},c_{6}\}$  & $\cong \Big<a,b \vert aba=bab \Big> \ncong \Z$   
\end{TAB}
\newline
\vspace*{2mm}
\newline
$\bullet$ $\mathbf{6_{2}}$ {\bf with three weldings:}\\

\begin{TAB}(r,0.3cm,0.5cm)[5pt]{|c|c|}{|c|c|c|c|c|c|}
\centering
 $\mathbf{\{c_{i},c_{j},c_{k}\},\; i<j<k}$   & \shortstack{{\bf Welded knot group} \\ $\mathbf{G(K)}$}\\

\shortstack{ $\{c_{1},c_{3},c_{4}\},\{c_{1},c_{3},c_{5}\},\{c_{1},c_{3},c_{6}\},$ \\ $\{c_{1},c_{4},c_{5}\}, \{c_{1},c_{4},c_{6}\},\{c_{1},c_{5},c_{6}\}$ } & $\cong \Z$ \\
  
 \shortstack{$\{c_{1},c_{2},c_{4}\},\{c_{2},c_{3},c_{5}\},\{c_{2},c_{3},c_{6}\},\{c_{2},c_{4},c_{5}\}$ \\ $\{c_{2},c_{5},c_{6}\},\{c_{3},c_{4},c_{5}\},\{c_{3},c_{4},c_{6}\},\{c_{4},c_{5},c_{6}\}$ } & $\cong \Z$ \\
 
$\{c_{1},c_{2},c_{3}\},\{c_{1},c_{2},c_{5}\},\{c_{1},c_{2},c_{6}\}$  & $\cong \Big<a,b \vert aba=bab \Big> \ncong \Z$ \\

$\{c_{2},c_{3},c_{4}\}$  & $\cong \Big<a,b \vert a^{2}=b^{2},a^{-1}ba=ba^{-1}b \Big> \ncong \Z$    \\

$\{c_{2},c_{4},c_{6}\}$  & $\cong \Big<a,b \vert bab^{-1}=aba^{-1} \Big> \ncong \Z$    
\end{TAB}
\newline
\vspace*{2mm}
\newline
\newpage
$\bullet$ $\mathbf{6_{3}}$ {\bf with two weldings:}\\

\begin{TAB}(r,0.3cm,0.5cm)[5pt]{|c|c|}{|c|c|c|c|c|c|}
\centering
 \shortstack{{\bf pair of welded crossings} \\ $\mathbf{\{c_{i},c_{j}\},\; i<j}$}   & \shortstack{{\bf Welded knot group} \\ $\mathbf{G(K)}$}\\
\shortstack{$\{c_{1},c_{j}\}, \forall j=3,4,5,6$,\\ $\{c_{2},c_{j}\}, \forall j=3,5,6$ } & $\cong \Z$  \\

\shortstack{$\{c_{3},c_{j}\}, \forall j=4,5,6$ \\
$\{c_{4},c_{j}\}, \forall j=5,6$   } & $\cong \Z$  \\

$\{c_{1},c_{2}\}$  & $\cong \Big<a,b \vert aba^{-1}=b^{-1}ab \Big> \ncong \Z$    \\

$\{c_{2},c_{4}\}$  & $\cong \Big<a,b \vert b^{2}=a^{2},a^{-1}ba=ba^{-1}b \Big> \ncong \Z$    \\

$\{c_{5},c_{6}\}$  & $\cong \Big<a,b \vert aba=bab \Big> \ncong \Z$   
\end{TAB}
\newline
\vspace*{2mm}
\newline
$\bullet$ $\mathbf{6_{3}}$ {\bf with three weldings:}\\

\begin{TAB}(r,0.3cm,0.5cm)[5pt]{|c|c|}{|c|c|c|c|c|}
\centering
$\mathbf{\{c_{i},c_{j},c_{k}\},\; i<j<k}$   & \shortstack{{\bf Welded knot group} \\ $\mathbf{G(K)}$}\\
 
 $\{c_{1},c_{2},c_{6}\},\{c_{2},c_{3},c_{5}\},\{c_{2},c_{3},c_{6}\},\{c_{3},c_{4},c_{6}\}$  & $\cong \Z$ \\
 
$\{c_{1},c_{2},c_{3}\},\{c_{1},c_{2},c_{4}\},\{c_{1},c_{2},c_{5}\},\{c_{4},c_{5},c_{6}\}$  & $\cong \Big<a,b \vert aba=bab \Big> \ncong \Z$ \\

$\{c_{2},c_{3},c_{4}\}$  & $\cong \Big<a,b \vert a^{2}=b^{2},ab^{-1}a=b^{-1}ab \Big> \ncong \Z$    \\

$\{c_{3},c_{4},c_{5}\}$  & $\cong \Big<a,b \vert a^{2}=b^{2},ab^{-1}a=ba^{-1}b \Big> \ncong \Z$    
\end{TAB}

\section{Conclusion and Future Directions}
\noindent 
In this paper, we generated infinite families of non-trivial ribbon torus-knots using the tube map by welding at most two classical crossings of the standard diagrams of torus and twist knots.
It is observed that the non-triviality of these knots depends not only on the positions of the welding but also the number of crossings between them.
Some non-trivial ribbon torus knots with knot group not isomorphic to $\Z$ are given in the preceding sections, associated to the welded torus-knot family $K(2,2n+1)$ and the welded twist knot family $T_{n}$. 
We are unable to decide the triviality of welded knots with knot group $\Z$ since the unknotting conjecture has not been proved yet.
In certain cases, we employed the welded unknotting number to confirm whether or not the welded knot belonging to knot group $\Z$ is a welded unknot.
As a future work, we would like to study bounds of the welded unknotting number and generate more examples of non-trivial ribbon torus-knots by considering other families of classical knots and welding more than two crossings.
	
\section*{Acknowledgment}	The authors are thankful for the valuable discussions with Louis Kauffman.

\end{document}